\theoremstyle{plain}
\newtheorem{theorem}{Theorem}
\newtheorem{corollary}[theorem]{Corollary}
\newtheorem{lemma}[theorem]{Lemma}
\theoremstyle{definition}
\theoremstyle{remark}
\newtheorem{remark}{Remark}
\DeclareMathOperator*{\essinf}{ess\,inf}
\begin{document}
	% authors
	\author{Phuong Le}
	\address{Phuong Le$^{1,2}$ (ORCID: 0000-0003-4724-7118)\newline
		$^1$Faculty of Economic Mathematics, University of Economics and Law, Ho Chi Minh City, Vietnam; \newline
		$^2$Vietnam National University, Ho Chi Minh City, Vietnam}
	\email{phuongl@uel.edu.vn}
	
	% classifications
	\subjclass[2010]{35J92, 35J75, 35B40, 35B06, 35B51}
	\keywords{quasilinear elliptic problems, $p$-Laplace problems, singular nonlinearities, regularity}
	
	% title & abstract
	\title[Boundary estimates for singular quasilinear]{Boundary estimates for singular elliptic problems involving a gradient term}
	\begin{abstract}
		We study the behavior of weak solutions to the singular quasilinear elliptic problem $-\Delta_p u + \vartheta |\nabla u|^q = \frac{1}{u^\gamma} + f(u)$, in a bounded domain with the Dirichlet boundary condition, where $p>1$, $\gamma>0$, $0<q\le p$, $\vartheta\ge0$ and $f:[0,+\infty)\to\mathbb{R}$ is a locally Lipschitz continuous function. We obtain a precise estimate for directional derivatives of positive solutions in a neighborhood of the boundary. We also deduce the symmetry of positive solutions to the problem in a bounded symmetric convex domain. Our results are new even in the case $p=2$ and $\vartheta=0$.
	\end{abstract}
	
	\maketitle
	
	\section{Introduction}
	We study the behavior near the boundary of solutions to the quasilinear elliptic problems
	\begin{equation}\label{main}
		\begin{cases}
			-\Delta_p u + \vartheta |\nabla u|^q = \dfrac{1}{u^\gamma} + f(u) &\text{ in } \Omega,\\
			u>0 &\text{ in } \Omega,\\
			u=0 &\text{ on } \partial\Omega,
		\end{cases}
	\end{equation}
	where we always assume that $\Omega$ is a bounded $C^2$ domain of $\mathbb{R}^N$, $N\ge1$, $p>1$, $\gamma>0$, $0<q\le p$, $\vartheta\ge0$ and $f:[0,+\infty)\to\mathbb{R}$ is a locally Lipschitz continuous function.
	
	We call $u \in W^{1,p}_{\rm loc}(\Omega) \cap L^\infty(\Omega)$ a weak solution to the first equation of \eqref{main} if
	\[
	\int_\Omega |\nabla u|^{p-2} \langle \nabla u, \nabla \varphi \rangle + \vartheta \int_\Omega |\nabla u|^q \varphi = \int_\Omega \frac{\varphi}{u^\gamma} + \int_\Omega f(u) \varphi \quad\text{ for every } \varphi \in C^1_c(\Omega).
	\]
	Moreover, the positivity of $u$ is understood as
	\[
	\essinf_{x\in K} u(x) > 0 \quad \text{ for every } K \subset\subset \Omega,
	\]
	and the zero Dirichlet boundary condition also has to be understood in the weak meaning
	\[
	(u - \varepsilon)^+ \in W^{1,p}_0(\Omega) \quad \text{ for every } \varepsilon>0.
	\]
	In the same spirit, for $v_1,v_2 \in W^{1,p}_{\rm loc}(\Omega) \cap L^\infty(\Omega)$, we say that $v_1 \le v_2$ on $\partial\Omega$ if
	\[
	(v_1 - v_2 - \varepsilon)^+ \in W^{1,p}_0(\Omega) \quad \text{ for every } \varepsilon>0.
	\]
	
	%We define weak solutions to other singular problems that will be treated in the same manner.
	We will show later that actually $u\in C^{1,\alpha}_{\rm loc}(\Omega)\cap C(\overline{\Omega})$ for some $\alpha\in(0,1)$. Since the nonlinearity is singular near the boundary, it is notable that $C^1$ regularity cannot be extended up to the boundary and the gradient generally blows up near the boundary in such a way that $u \not\in W^{1,p}_0(\Omega)$ (see \cite{MR427826,MR1037213}). Furthermore, the difficulty in understanding problem \eqref{main} lies in the fact that both sides of the problem may exhibit singularity near $\partial\Omega$. A simpler situation was investigated by Esposito and Sciunzi in \cite{MR4044739} for the singular quasilinear elliptic problem without the gradient term
	\begin{equation}\label{singular}
		\begin{cases}
			-\Delta_p u = \dfrac{1}{u^\gamma} + f(u) &\text{ in } \Omega,\\
			u>0 &\text{ in } \Omega,\\
			u=0 &\text{ on } \partial\Omega.
		\end{cases}
	\end{equation}
	This is exactly problem \eqref{main} with $\vartheta=0$.
	
	Various authors have studied singular elliptic problems without a gradient term in the past. We refer in particular to the pioneering papers by Crandall, Rabinowitz and Tartar \cite{MR427826} and by Lazer and McKenna \cite{MR1037213}, where they combine the method of subsolutions
	and supersolutions with the approximation technique to prove the existence, uniqueness and qualitative properties of solutions to
	\begin{equation}\label{pure_singular}		
		\begin{cases}
			-\Delta_p u = \dfrac{a(x)}{u^\gamma} &\text{ in } \Omega,\\
			u>0 &\text{ in } \Omega,\\
			u=0 &\text{ on } \partial\Omega
		\end{cases}
	\end{equation}
	with $p=2$ (see \cite{MR1263903} for further insights). We also refer to the work by Boccardo and Orsina \cite{MR2592976} on the same problem in a more general setting, exploiting truncation arguments and Schauder's fixed-point theorem. Later, the existence, uniqueness and regularity of solutions to problem \eqref{pure_singular} with $p>1$ were derived in \cite{MR3136107,MR3478284}. Another possible approach to problem \eqref{singular} with $p=2$ and $f(u)=\lambda u^p$ is the variational one as in \cite{MR1022988,MR2159469,MR2099611} (see also \cite{MR2927112,MR3130522}).
	
	Due to its possible blowup, the study of the behavior of directional derivatives of solutions near the boundary is an important issue. As we mentioned above, the situation is much more difficult when the singular term and gradient term both appear in the equation with the homogeneous Dirichlet boundary condition. Currently, there are no results on such estimates for problem \eqref{main} in the literature. Other kinds of results are also limited. We refer to \cite{MR4658655,MR4333974} for symmetry results for solutions of \eqref{main} in the case $p=2$ and $q\in\{1,2\}$. The existence of a solution to problem \eqref{main} in the case $1<q=p<N$, $f\equiv 0$ was established recently in \cite{MR4587608}.
	
	Now we recall the main result in \cite{MR4044739} (see also \cite{MR3912757} for $p=2$), which motivates our work. Throughout the paper, we denote the distance function for domain $\Omega$ by
	\[
	d_\Omega(x) := {\rm dist}(x,\partial\Omega) \quad\text{ for } x \in \Omega.
	\]
	For $\delta>0$, we denote by
	\[
	I_\delta(\Omega) := \{x\in\Omega \mid d_\Omega(x)<\delta\}
	\]
	the $\delta$--neighborhood of the boundary of $\Omega$. Since $\Omega$ is a $C^2$ domain, $ I_\delta(\Omega)$ has the unique nearest point property for small $\delta$ (see \cite{MR749908}). For such $\delta$ and $x\in I_\delta(\Omega)$, let $\hat x\in\partial\Omega$ be the point such that $|x-\hat x|=d_\Omega(x)$. Then we define
	\[
	\eta(x) = \frac{x-\hat x}{|x-\hat x|}.
	\]
	It was proved in \cite[Theorem 1.1]{MR4044739} that for $\gamma>1$ and $\beta\in(0,1)$, there exists a neighborhood $ I_\delta(\Omega)$ of $\partial\Omega$ such that for any solution $u\in C^{1,\alpha}_{\rm loc}(\Omega)\cap C(\overline{\Omega})$ to \eqref{singular}, we have $\frac{\partial u(x)}{\partial \nu}>0$
	whenever $\nu \in \mathbb{S}^{N-1}$ with $\langle\nu,\eta(x)\rangle\ge\beta$, where
	\[
	\mathbb{S}^{N-1} := \{x\in\mathbb{R}^N \mid |x|=1\}.
	\]
	This result is regarded as the H\"opf boundary lemma for the problem. The H\"opf boundary lemma is already known in the non-singular quasilinear setting (see \cite{MR768629}) and plays an important role in nonlinear analysis.
	
	Intuitively, since the gradient of $u$ may blow up near the boundary, we would expect $\frac{\partial u(x)}{\partial \nu}$ goes to $+\infty$ when $x$ approaches $\partial\Omega$ and $\langle\nu,\eta(x)\rangle>\beta$. In this paper, we not only prove such a claim for problem \eqref{main} but also provide a precise estimate for $\frac{\partial u(x)}{\partial \nu}$ near the boundary. We also consider the situations in which $0<\gamma\le1$. These results are new even in the semilinear case $p=2$.
	
	Our first result is concerned with the strongly singular case $\gamma>1$.
	
	\begin{theorem}\label{th:gamma>1} Let $u \in W^{1,p}_{\rm loc}(\Omega) \cap L^\infty(\Omega)$ be a solution to \eqref{main} with $\gamma>1$. Then $u\in C^{1,\alpha}_{\rm loc}(\Omega)\cap C^{\frac{p}{\gamma+p-1}}(\overline{\Omega})$ for some $0<\alpha<1$. Furthermore, the following assertions hold:
		\begin{enumerate}
			\item[(i)] For any sequences $x_n \in \Omega$ and $\nu_n \in \mathbb{S}^{N-1}$ such that
			\[
			d_\Omega(x_n) \to 0 \quad\text{ and } \quad \langle \nu_n, \eta(x_n) \rangle \to \beta \quad\text{ as } n\to \infty,
			\]
			we have
			\begin{equation}\label{limit>1}
				d_\Omega(x_n)^\frac{\gamma-1}{\gamma+p-1} \frac{\partial u(x_n)}{\partial \nu_n} \to \left(\frac{(\gamma + p - 1)^p}{p^{p-1}(p-1)(\gamma - 1)}\right)^{\frac{1}{\gamma + p - 1}} \frac{p\beta}{\gamma + p - 1}.
			\end{equation}
			\item[(ii)] There exists $C>0$ such that
			\begin{equation}\label{grad_bound>1}
				|\nabla u(x)| \le C d_\Omega(x)^\frac{1-\gamma}{\gamma+p-1} \quad\text{ for all } x\in\Omega.
			\end{equation}
		\end{enumerate}
	\end{theorem}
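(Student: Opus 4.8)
The plan is to show that near each boundary point $u$ behaves like the explicit one-dimensional profile obtained by balancing $-\Delta_p$ against the singular reaction, and then to read off \eqref{limit>1} and \eqref{grad_bound>1} from a rescaling argument. Substituting the ansatz $c\,d_\Omega^{\sigma}$ into $-\Delta_p(c\,d_\Omega^{\sigma})\approx(c\,d_\Omega^{\sigma})^{-\gamma}$ and using $|\nabla d_\Omega|=1$ forces
\[
\sigma=\frac{p}{\gamma+p-1},\qquad c=c_0:=\left(\frac{(\gamma+p-1)^p}{p^{p-1}(p-1)(\gamma-1)}\right)^{\frac1{\gamma+p-1}},
\]
and one checks that $c_0\sigma$ and $1-\sigma=\frac{\gamma-1}{\gamma+p-1}$ are exactly the constants appearing in \eqref{limit>1}; here $0<\sigma<1$ precisely because $\gamma>1$. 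The reason $\vartheta$ and $q$ do not enter the leading order is that $q\le p$ implies $(\sigma-1)q>-\sigma\gamma$, so $\vartheta|\nabla u|^q\sim\vartheta\,d_\Omega^{(\sigma-1)q}$ is of strictly lower order near $\partial\Omega$ than the two balancing terms $\sim d_\Omega^{-\sigma\gamma}$, while $f(u)$ stays bounded there since $u\to0$.

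\emph{Regularity and two-sided bounds.} Since $u$ is locally bounded and locally bounded away from $0$, the reaction $u^{-\gamma}+f(u)$ lies in $L^\infty_{\rm loc}(\Omega)$; writing the equation as $-\Delta_p u=g(x,u,\nabla u)$ with $|g|\le C+\vartheta|\nabla u|^q$ and $q\le p$, the local gradient bounds and $C^{1,\alpha}$ theory for quasilinear equations with (sub)natural gradient growth give $u\in C^{1,\alpha}_{\rm loc}(\Omega)$. Next, using the ODE computation above and the lower-order nature of the $\vartheta|\nabla\cdot|^q$ and $f$ terms, I would construct sub- and supersolutions of the form $c_\pm\psi^{\sigma}$ with $\psi$ a smooth positive function comparable to $d_\Omega$ in a tubular neighbourhood of $\partial\Omega$; the weak comparison principle for $-\Delta_p$ with the decreasing singular reaction then yields $\tfrac1C d_\Omega(x)^{\sigma}\le u(x)\le C d_\Omega(x)^{\sigma}$ on $\Omega$, and combining the upper bound with the interior H\"older estimate gives $u\in C^{0,\sigma}(\overline\Omega)$, i.e. the asserted $C^{\frac{p}{\gamma+p-1}}(\overline\Omega)$ regularity.

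\emph{Blow-up (the core step).} Fix sequences as in (i) and pass to a subsequence. Let $d_n=d_\Omega(x_n)$, let $\hat x_n\in\partial\Omega$ be the nearest point, $\nu(\hat x_n)$ the inner unit normal, and set $u_n(y)=d_n^{-\sigma}u(\hat x_n+d_n y)$ for $y\in\Omega_n:=\tfrac1{d_n}(\Omega-\hat x_n)$. Using $\sigma(\gamma+p-1)=p$, a direct computation shows
\[
-\Delta_p u_n=u_n^{-\gamma}+r_n\ \text{ in }\Omega_n,\qquad u_n=0\ \text{ on }\partial\Omega_n,
\]
where $r_n$ collects the $f$ and gradient contributions and carries a strictly positive power of $d_n$ (using $0<\sigma<1$ and $q\le p$), so $r_n\to0$ once the $|\nabla u_n|$ are controlled locally by the (sub)natural-growth estimates. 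As $\Omega$ is $C^2$, $\Omega_n\to H:=\{\langle y,\nu_0\rangle>0\}$ in $C^2_{\rm loc}$ with $\nu_0=\lim\nu(\hat x_n)$, and the bounds of the previous step rescale to $\tfrac1C d_{\Omega_n}^{\sigma}\le u_n\le C d_{\Omega_n}^{\sigma}$. By the interior $C^{1,\alpha}_{\rm loc}$ estimates (after a routine bootstrap) and this rescaled H\"older bound, $u_n\to U$ in $C^1_{\rm loc}(H)\cap C(\overline H)$ along a further subsequence, where $U$ solves $-\Delta_p U=U^{-\gamma}$ in $H$, $U=0$ on $\partial H$, and $\tfrac1C d_H^{\sigma}\le U\le C d_H^{\sigma}$. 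The rigidity step is to show $U=c_0\langle y,\nu_0\rangle^{\sigma}$: translation invariance of the limit problem in the directions parallel to $\partial H$, together with a comparison/uniqueness argument against the explicit one-dimensional barriers $c_0(\langle y,\nu_0\rangle+\tau)^{\sigma}$ (exploiting that $t\mapsto t^{-\gamma}$ is decreasing), forces $U$ to be one-dimensional and equal to $c_0\langle y,\nu_0\rangle^{\sigma}$. Finally, $y_n:=\frac{x_n-\hat x_n}{d_n}$ satisfies $|y_n|=1$ and $\langle y_n,\nu(\hat x_n)\rangle=1$, so $y_n\to\nu_0$, an interior point of $H$ at distance $1$ from $\partial H$; after a further subsequence $\nu_n\to\nu_\infty$ with $\langle\nu_\infty,\nu_0\rangle=\lim\langle\nu_n,y_n\rangle=\beta$. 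Passing to the limit in $\nabla u(x_n)=d_n^{\sigma-1}\nabla_y u_n(y_n)$,
\[
d_n^{\,1-\sigma}\frac{\partial u(x_n)}{\partial\nu_n}=\langle\nabla_y u_n(y_n),\nu_n\rangle\longrightarrow\langle\nabla U(\nu_0),\nu_\infty\rangle=c_0\sigma\langle\nu_0,\nu_\infty\rangle=c_0\sigma\beta,
\]
which is \eqref{limit>1}; since the limit is independent of the subsequence, the full sequence converges.

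\emph{Gradient bound and the main obstacle.} For arbitrary $x\in\Omega$ with $d=d_\Omega(x)$, the same rescaling gives $u_d(y)=d^{-\sigma}u(\hat x+dy)$ solving an equation whose data are controlled uniformly in $d$ on a fixed half-ball, together with the two-sided bound; uniform interior $C^1$ estimates bound $|\nabla_y u_d|$ at the point $y_x=\frac{x-\hat x}{d}$, which lies at distance $1$ from $\partial\Omega_d$, and undoing the scaling gives $|\nabla u(x)|\le C d^{\sigma-1}=C d_\Omega(x)^{\frac{1-\gamma}{\gamma+p-1}}$; on the compact core $|\nabla u|$ is already bounded, so this yields \eqref{grad_bound>1}. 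I expect the genuinely delicate points to be (a) the uniqueness/rigidity of the half-space profile $U$ for the singular $p$-Laplacian on the unbounded domain $H$, since the reaction is not Lipschitz at $0$ and the comparison principle must be applied with care (e.g. via translations kept away from $\partial H$, or a monotonicity argument exploiting the decreasing reaction), and (b) obtaining the uniform, up-to-the-boundary control of the rescaled solutions needed for compactness, where the boundary blow-up of $|\nabla u_n|$ has to be absorbed purely through the two-sided barrier estimates. Carrying the term $\vartheta|\nabla u|^q$ through the barriers, through $r_n\to0$, and through the local gradient estimates in the borderline case $q=p$ is a recurring technical burden, but it remains manageable because $q\le p$.
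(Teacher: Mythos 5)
Your overall strategy matches the paper's: first establish the two-sided barrier bounds $c_1 d_\Omega^\sigma \le u \le c_2 d_\Omega^\sigma$ with $\sigma=\frac{p}{\gamma+p-1}$, then rescale by $d_\Omega(x_n)$, pass to a blow-up limit on the half-space, invoke a Liouville-type classification of the half-space profile, and finally run the same contradiction argument to get both the directional-derivative limit and the gradient bound. The constants, the rescaling exponents, and the role of the rigidity theorem are all correctly identified, and the observation that $q\le p$ makes the gradient term subleading in the rescaled equation is exactly the mechanism the paper uses to see $r_n\to 0$.

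The genuine gap is in the lower-bound half of the barrier estimate. You write that ``the weak comparison principle for $-\Delta_p$ with the decreasing singular reaction then yields'' the two-sided bound, and later that carrying $\vartheta|\nabla u|^q$ through the barriers ``remains manageable because $q\le p$.'' But for the lower bound this is not a lower-order nuisance that goes away by itself: you must compare $u$ from below against a subsolution, and $-\Delta_p u = u^{-\gamma}+f(u)-\vartheta|\nabla u|^q$ has the gradient term with the \emph{wrong} sign, and $|\nabla u|$ blows up near $\partial\Omega$, so the hypotheses of the ordinary weak comparison principle for $-\Delta_p$ with a decreasing reaction are simply not met. The paper's way around this is to move the gradient term to the left, use $|\nabla u|^q\le |\nabla u|^p+1$, and prove a new comparison principle (Lemma~\ref{lem:wcp_singular}) for operators of the form $-\Delta_p v+\vartheta|\nabla v|^p$ by means of the change of unknown $v\mapsto e^{-\vartheta v/(p-1)}$, which eliminates the gradient term and reduces matters to a monotone reaction. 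This Hopf--Cole-type device is the key new technical ingredient, and your sketch does not supply a substitute for it; without something in its place, the lower barrier bound --- and hence everything downstream, including compactness of the rescaled family --- is not justified. A secondary, smaller point: the paper deduces $u\in C^{p/(\gamma+p-1)}(\overline\Omega)$ from the \emph{gradient} bound \eqref{grad_bound>1} by showing $u^{(\gamma+p-1)/p}$ is Lipschitz, whereas you propose to deduce it from the pointwise upper bound plus interior Hölder estimates; the latter can be made to work by an interpolation argument but requires more care than the sentence suggests.

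Finally, the rigidity of the half-space profile $U=c_0\,y_N^\sigma$, which you correctly flag as the delicate core, is not something the paper reproves: it cites the classification result of Esposito--Sciunzi (Theorem~1.2 of \cite{MR4044739}), valid precisely under the two-sided bound $\frac1C y_N^\sigma\le U\le C y_N^\sigma$ that the blow-up inherits. If you intend to prove this from scratch via sliding/translation arguments, that is a separate theorem in its own right.
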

	
	An interesting point of Theorem \ref{th:gamma>1} is that the limit \eqref{limit>1} does not depend on $\Omega$, $f$ or the appearance of the gradient term $|\nabla u|^q$.
	As a consequence, we can easily derive the following result by a contradiction argument.	
	\begin{corollary}\label{co:gamma>1} Under assumptions of Theorem \ref{th:gamma>1}, for any $\beta\in(0,1)$, there exist $\delta, c_1, c_2>0$ such that
		\[
		c_1 d_\Omega(x)^\frac{1-\gamma}{\gamma+p-1} \le \frac{\partial u(x)}{\partial \nu} \le c_2 d_\Omega(x)^\frac{1-\gamma}{\gamma+p-1} \quad\text{ for all } x\in I_\delta(\Omega)
		\]
		whenever $\nu \in \mathbb{S}^{N-1}$ with $\langle\nu,\eta(x)\rangle\ge\beta$.
	\end{corollary}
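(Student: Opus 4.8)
The plan is a straightforward compactness-plus-contradiction argument; all the real content already sits in Theorem~\ref{th:gamma>1}(i), and the corollary merely repackages the limit \eqref{limit>1} into a two-sided pointwise bound that is uniform in $x$ and in the admissible directions $\nu$. Fix $\beta\in(0,1)$ and pick $\delta_0>0$ small enough that $I_{\delta_0}(\Omega)$ enjoys the unique nearest point property, so that $\eta(x)$ is defined for $x\in I_{\delta_0}(\Omega)$; since $u\in C^{1,\alpha}_{\rm loc}(\Omega)$ by Theorem~\ref{th:gamma>1}, the directional derivative $\frac{\partial u(x)}{\partial\nu}=\langle\nabla u(x),\nu\rangle$ is well defined pointwise for every $x\in\Omega$ and every $\nu\in\mathbb{S}^{N-1}$. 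Write
\[
L(t):=\left(\frac{(\gamma + p - 1)^p}{p^{p-1}(p-1)(\gamma - 1)}\right)^{\frac{1}{\gamma + p - 1}}\frac{pt}{\gamma + p - 1},
\]
so that $L$ is linear and strictly increasing on $[0,1]$, with $L(\beta)>0$ and $L(1)<\infty$.

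For the upper bound I would suppose, for contradiction, that no pair $\delta,c_2>0$ works. Applying the negation of the desired statement with $\delta=1/n$ and $c_2=n$, one obtains, for every $n$, a point $x_n\in I_{1/n}(\Omega)$ and a direction $\nu_n\in\mathbb{S}^{N-1}$ with $\langle\nu_n,\eta(x_n)\rangle\ge\beta$ and
\[
d_\Omega(x_n)^{\frac{\gamma-1}{\gamma+p-1}}\,\frac{\partial u(x_n)}{\partial\nu_n}>n.
\]
Then $d_\Omega(x_n)\to0$, and since $\langle\nu_n,\eta(x_n)\rangle\in[\beta,1]$ one may pass to a subsequence along which $\langle\nu_n,\eta(x_n)\rangle\to\beta_0\in[\beta,1]$. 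By Theorem~\ref{th:gamma>1}(i) the left-hand side above converges to $L(\beta_0)\le L(1)<\infty$, contradicting that it exceeds $n$ for all $n$.

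The lower bound is handled symmetrically: if no pair $\delta,c_1>0$ works, applying the negation with $\delta=c_1=1/n$ produces $x_n\in I_{1/n}(\Omega)$ and $\nu_n$ with $\langle\nu_n,\eta(x_n)\rangle\ge\beta$ and $d_\Omega(x_n)^{\frac{\gamma-1}{\gamma+p-1}}\frac{\partial u(x_n)}{\partial\nu_n}<1/n$; passing to a subsequence with $\langle\nu_n,\eta(x_n)\rangle\to\beta_0\ge\beta$, Theorem~\ref{th:gamma>1}(i) forces this quantity to converge to $L(\beta_0)\ge L(\beta)>0$, which is again impossible. Intersecting the two neighborhoods and retaining the two constants yields the required $\delta,c_1,c_2$.

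I do not expect a genuine obstacle here; the argument is essentially bookkeeping. The only points that need a little care are: checking that, for $n$ large, $x_n$ lies in $I_{\delta_0}(\Omega)$ so that $\eta(x_n)$ and hence $\langle\nu_n,\eta(x_n)\rangle$ make sense, which is immediate from $d_\Omega(x_n)\to0$; observing that the hypothesis of Theorem~\ref{th:gamma>1}(i) only asks that $\langle\nu_n,\eta(x_n)\rangle$ converge to \emph{some} value, which we secure by compactness of $[\beta,1]$; and using that this limiting value is $\ge\beta>0$, so the resulting constant $L(\beta_0)$ is strictly positive, which is precisely what makes the lower estimate, and therefore the H\"opf-type conclusion, nontrivial.
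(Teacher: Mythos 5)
Your argument is correct and is precisely the ``contradiction argument'' the paper alludes to without spelling out: negate each one-sided bound along sequences $\delta=1/n$, $c=n$ (or $c=1/n$), pass to a convergent subsequence of $\langle\nu_n,\eta(x_n)\rangle\in[\beta,1]$, invoke Theorem~\ref{th:gamma>1}(i) with that limiting value $\beta_0$, and read off the contradiction from $0<L(\beta)\le L(\beta_0)\le L(1)<\infty$. The bookkeeping details you flag (well-definedness of $\eta(x_n)$ for large $n$, intersecting the two $\delta$-neighborhoods) are handled correctly.
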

	
	Let us present our idea of the proof of Theorem \ref{th:gamma>1}. We first establish a sharp estimate of solutions
	\begin{equation}\label{asymp>1}
		c_1 d_\Omega(x)^\frac{p}{\gamma+p-1} \le u(x) \le c_2 d_\Omega(x)^\frac{p}{\gamma+p-1} \quad\text{ for all } x\in\Omega,
	\end{equation}
	where $c_2\ge c_1>0$ (see Theorem \ref{th:asymptotic}).
	This step was done for problem \eqref{singular} in \cite{MR4044739} (see also \cite{MR427826} for the case $p=2$). For problem \eqref{main}, the main difficulty is that weak comparison principles cannot be derived for the problem with arbitrary $q\in(0,p]$ due to the unboundedness of $|\nabla u|^q$ and $\frac{1}{u^\gamma}$ near the boundary. This situation is very different from other papers on regular quasilinear elliptic problems with gradient terms, such as \cite{MR3303939,MR4022661,MR4635360}, where the boundedness of gradient terms is vital in deriving a priori estimates. To overcome this difficulty, we introduce a weak comparison principle involving only $|\nabla u|^p$ and $\frac{1}{u^\gamma}$ and use it in our comparison arguments. Whenever \eqref{asymp>1} is established, we can use the scaling technique to derive precise estimates for directional derivatives of solutions following some ideas in \cite{MR4044739} but in a new way.
	
	In view of \eqref{asymp>1}, we find that $C^{\frac{p}{\gamma+p-1}}(\overline{\Omega})$ is the optimal boundary H\"older regularity for solutions. Another direct consequence of Theorem \ref{th:gamma>1} and \eqref{asymp>1} is the following.
	\begin{corollary}
		Under assumptions of Theorem \ref{th:gamma>1}, we have $u \in W^{1,p}_0(\Omega)$ if and only if $\gamma<\frac{2p-1}{p-1}$.
	\end{corollary}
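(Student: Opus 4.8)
The plan is to reduce membership in $W^{1,p}_0(\Omega)$ to the $L^p$-integrability of $\nabla u$, and then to read off the latter from a two-sided estimate of $|\nabla u|$ in terms of $d_\Omega$ near $\partial\Omega$.

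First I would record sharp gradient bounds near the boundary. The upper bound \eqref{grad_bound>1} provides $C>0$ with $|\nabla u(x)|\le C\, d_\Omega(x)^{\frac{1-\gamma}{\gamma+p-1}}$ for all $x\in\Omega$. For the matching lower bound, I would apply Theorem~\ref{th:gamma>1}(i) (equivalently Corollary~\ref{co:gamma>1}) with $\beta=1$ and $\nu_n=\eta(x_n)$: for every sequence $x_n\in\Omega$ with $d_\Omega(x_n)\to0$ one has $d_\Omega(x_n)^{\frac{\gamma-1}{\gamma+p-1}}\,\frac{\partial u(x_n)}{\partial\eta(x_n)}\to L$, where $L>0$ is the constant on the right-hand side of \eqref{limit>1}. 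Since $\frac{\partial u}{\partial\eta}\le|\nabla u|$, there exist $\delta>0$ and $c>0$ with $|\nabla u(x)|\ge c\, d_\Omega(x)^{\frac{1-\gamma}{\gamma+p-1}}$ for all $x\in I_\delta(\Omega)$.

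Next I would invoke the elementary fact that, since $\Omega$ is $C^2$, the distance function $d_\Omega$ is near $\partial\Omega$ comparable to a single coordinate, so by the coarea formula $\int_{I_\delta(\Omega)} d_\Omega(x)^{s}\,dx<\infty$ if and only if $s>-1$; a direct computation gives
\[
\frac{p(1-\gamma)}{\gamma+p-1}>-1 \iff 2p-1>\gamma(p-1) \iff \gamma<\frac{2p-1}{p-1}.
\]
If $\gamma<\frac{2p-1}{p-1}$, then writing $\Omega=(\Omega\setminus I_\delta(\Omega))\cup I_\delta(\Omega)$, the contribution of the compact set $\Omega\setminus I_\delta(\Omega)\subset\subset\Omega$ to $\int_\Omega|\nabla u|^p$ is finite by $C^{1,\alpha}_{\rm loc}$ regularity, while the contribution of $I_\delta(\Omega)$ is bounded by $C^p\int_{I_\delta(\Omega)} d_\Omega^{\frac{p(1-\gamma)}{\gamma+p-1}}<\infty$; hence $\nabla u\in L^p(\Omega)$, and since $u\in L^\infty(\Omega)$ we get $u\in W^{1,p}(\Omega)$. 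To upgrade this to $u\in W^{1,p}_0(\Omega)$ I would use the weak form of the boundary condition: $(u-\varepsilon)^+\in W^{1,p}_0(\Omega)$ for every $\varepsilon>0$, and $(u-\varepsilon)^+\to u$ in $W^{1,p}(\Omega)$ as $\varepsilon\to0$ by dominated convergence (using $|\nabla(u-\varepsilon)^+|\le|\nabla u|\in L^p(\Omega)$ and $\nabla(u-\varepsilon)^+\to\nabla u$ a.e. since $u>0$ a.e.); closedness of $W^{1,p}_0(\Omega)$ in $W^{1,p}(\Omega)$ then gives the claim. Conversely, if $\gamma\ge\frac{2p-1}{p-1}$, the lower bound yields $\int_\Omega|\nabla u|^p\ge c^p\int_{I_\delta(\Omega)} d_\Omega^{\frac{p(1-\gamma)}{\gamma+p-1}}=+\infty$, so $\nabla u\notin L^p(\Omega)$ and in particular $u\notin W^{1,p}_0(\Omega)$.

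The only genuinely non-bookkeeping input is the two-sided bound $|\nabla u|\asymp d_\Omega^{\frac{1-\gamma}{\gamma+p-1}}$ near $\partial\Omega$, and that is already supplied by Theorem~\ref{th:gamma>1}; everything else is the integrability computation for $d_\Omega^{s}$ together with the standard truncation argument identifying $W^{1,p}_0(\Omega)$. I therefore expect no real obstacle; the only point requiring a little care is that the lower gradient bound must hold on a full one-sided neighbourhood $I_\delta(\Omega)$ rather than merely along a sequence, which is exactly what the sequential convergence in \eqref{limit>1} (valid for \emph{arbitrary} $x_n\to\partial\Omega$) delivers.
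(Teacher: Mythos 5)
Your proof is correct and matches the approach implied by the paper, which declares the corollary a direct consequence of Theorem \ref{th:gamma>1} and \eqref{asymp>1}: the upper gradient bound \eqref{grad_bound>1} gives $\nabla u \in L^p(\Omega)$ precisely when the exponent $\frac{p(1-\gamma)}{\gamma+p-1}$ exceeds $-1$, the matching lower bound rules out $W^{1,p}$-membership otherwise, and the truncation $(u-\varepsilon)^+ \to u$ in $W^{1,p}(\Omega)$ upgrades $L^p$-integrability of $\nabla u$ to $u\in W^{1,p}_0(\Omega)$. One small imprecision: Corollary \ref{co:gamma>1} is stated only for $\beta\in(0,1)$, so ``with $\beta=1$'' is not literally permitted there; but this is harmless, since either you invoke Theorem \ref{th:gamma>1}(i) directly with $\nu_n=\eta(x_n)$ (so $\langle\nu_n,\eta(x_n)\rangle\equiv 1$, which the theorem allows), or you fix any $\beta\in(0,1)$ in the corollary and observe that $\nu=\eta(x)$ satisfies $\langle\nu,\eta(x)\rangle=1\ge\beta$, yielding the same one-sided lower bound $|\nabla u|\ge \frac{\partial u}{\partial\eta}\ge c\,d_\Omega^{\frac{1-\gamma}{\gamma+p-1}}$ on $I_\delta(\Omega)$.
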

	
	Next, we consider the cases $\gamma=1$ and $0<\gamma<1$, which are strikingly different. The H\"opf boundary lemma in the spirit of \cite{MR4044739} is not established for these cases, even for problem \eqref{singular} with $p=2$.
	We provide a sharp estimate for $\frac{\partial u(x)}{\partial \nu}$ in the case $\gamma=1$ as follows.
	\begin{theorem}\label{th:gamma=1} Let $u \in W^{1,p}_{\rm loc}(\Omega) \cap L^\infty(\Omega)$ be a solution to \eqref{main} with $\gamma=1$. Then $u\in C^{1,\alpha}_{\rm loc}(\Omega)\cap C^s(\overline{\Omega})$ for some $0<\alpha<1$ and all $s\in(0,1)$. Furthermore, the following assertions hold:
		\begin{enumerate}
			\item[(i)] For any $\beta\in(0,1)$, there exist $0<\delta<1$ and $c_1, c_2>0$ such that
			\begin{equation}\label{bound=1}
				c_1 \left(1-\ln d_\Omega(x)\right)^\frac{1}{p} \le \frac{\partial u(x)}{\partial \nu} \le c_2 \left(1-\ln d_\Omega(x)\right)^\frac{1}{p} \quad\text{ for all } x\in I_\delta(\Omega)
			\end{equation}
			whenever $\nu \in \mathbb{S}^{N-1}$ with $\langle\nu,\eta(x)\rangle\ge\beta$.
			\item[(ii)] There exists $L,C>0$ such that
			\begin{equation}\label{grad_bound=1}
				|\nabla u(x)| \le C \left(L-\ln d_\Omega(x)\right)^\frac{1}{p} \quad\text{ for all } x\in\Omega.
			\end{equation}
		\end{enumerate}
	\end{theorem}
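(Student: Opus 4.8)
The plan is to follow the strategy behind Theorem~\ref{th:gamma>1}, keeping in mind that at $\gamma=1$ the exponent $\frac{p}{\gamma+p-1}$ degenerates to $1$, so the genuine boundary rate of $u$ acquires a logarithmic correction and the precise limit \eqref{limit>1} has to be replaced by a two-sided bound.

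\emph{Step 1: regularity and the sharp estimate of $u$.} Rewriting \eqref{main} as $-\Delta_p u=\frac1u+f(u)-\vartheta|\nabla u|^q$, on each $K\subset\subset\Omega$ the zeroth-order right-hand side is bounded (since $\essinf_K u>0$ and $f$ is continuous) while the gradient term has natural growth $q\le p$; local gradient estimates for such quasilinear equations give $\nabla u\in L^\infty_{\rm loc}(\Omega)$, hence the right-hand side lies in $L^\infty_{\rm loc}(\Omega)$ and DiBenedetto--Tolksdorf regularity yields $u\in C^{1,\alpha}_{\rm loc}(\Omega)$. One then establishes the $\gamma=1$ analogue of \eqref{asymp>1} (cf. Theorem~\ref{th:asymptotic}):
\[
c_1\, d_\Omega(x)\bigl(1-\ln d_\Omega(x)\bigr)^{\frac1p}\ \le\ u(x)\ \le\ c_2\, d_\Omega(x)\bigl(1-\ln d_\Omega(x)\bigr)^{\frac1p}\qquad\text{near }\partial\Omega .
\]
Here the logarithm is forced because $-\Delta_p$ applied to the pure power $d_\Omega$ has vanishing leading coefficient, so one uses $d_\Omega(x)(L-\ln d_\Omega(x))^{1/p}$ (suitably scaled) as sub/supersolution near $\partial\Omega$ and concludes through the weak comparison principle tailored to the singular pair $|\nabla u|^p$, $u^{-\gamma}$. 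Since $t\mapsto t(1-\ln t)^{1/p}$ is H\"older continuous of every order $s<1$ on bounded intervals but not Lipschitz, this gives $u\in C^s(\overline\Omega)$ for all $s\in(0,1)$, which together with \eqref{grad_bound=1} below is optimal.

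\emph{Step 2: upper bounds by interior rescaling.} Fix $x\in\Omega$ with $t:=d_\Omega(x)$ small, put $M:=(1-\ln t)^{1/p}$, and consider $u_x(y):=u(x+ty)/(tM)$ on $B_{1/2}\subset\mathbb R^N$, which is legitimate since $B_{t/2}(x)\subset\subset\Omega$. By Step~1, $u_x$ is bounded above and below by positive constants, uniformly in $x$ (as $d_\Omega$ and $1-\ln d_\Omega$ vary negligibly over $B_{t/2}(x)$), and $u_x$ solves in $B_{1/2}$
\[
-\Delta_p u_x+\vartheta\, t\,M^{q-p+1}|\nabla u_x|^q=\frac{1}{M^p\,u_x}+\frac{t}{M^{p-1}}\,f(tM u_x),
\]
where $t\,M^{q-p+1}=t(1-\ln t)^{(q-p+1)/p}\to0$ and $\frac{t}{M^{p-1}}\to0$ as $t\to0$; thus the coefficients stay bounded and the gradient term has natural growth. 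Local gradient and $C^{1,\alpha}$ estimates give $\|\nabla u_x\|_{L^\infty(B_{1/4})}\le C$ with $C$ independent of $x$, that is $|\nabla u(x)|\le C(1-\ln d_\Omega(x))^{1/p}$; together with interior $C^1$ bounds away from $\partial\Omega$ this is \eqref{grad_bound=1}, and in particular $\frac{\partial u(x)}{\partial\nu}\le|\nabla u(x)|\le c_2(1-\ln d_\Omega(x))^{1/p}$, the upper half of \eqref{bound=1}.

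\emph{Step 3: the lower bound in \eqref{bound=1} by boundary blow-up, and the main obstacle.} Arguing by contradiction as for Corollary~\ref{co:gamma>1}, suppose there are $x_n\to\partial\Omega$ and $\nu_n\in\mathbb S^{N-1}$ with $\langle\nu_n,\eta(x_n)\rangle\ge\beta$ such that $(1-\ln t_n)^{-1/p}\frac{\partial u(x_n)}{\partial\nu_n}\to0$, where $t_n:=d_\Omega(x_n)$; along a subsequence $\langle\nu_n,\eta(x_n)\rangle\to\beta'\in[\beta,1]$ and $\nu_n\to\nu_*$ with $\langle\nu_*,e_N\rangle=\beta'$ after flattening $\partial\Omega$ near $\hat x_n$ and rotating so that the inner normal is $e_N$. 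With $M_n:=(1-\ln t_n)^{1/p}$, set $u_n(y):=u(x_n+t_ny)/(t_nM_n)$; then $x_n\mapsto0$, the rescaled domains converge (using $\partial\Omega\in C^2$) to $H:=\{y_N>-1\}$, the $u_n$ vanish on $\partial H$ and solve the equation of Step~2 on expanding balls. Uniform interior gradient and $C^{1,\alpha}$ estimates, together with uniform H\"older estimates up to $\partial H$, for the uniformly controlled (mildly) singular right-hand side give, along a subsequence, $u_n\to U$ locally uniformly on $\overline H$ and in $C^1_{\rm loc}(H)$; letting $n\to\infty$ kills all the extra terms, so $U$ is $p$-harmonic in $H$, continuous on $\overline H$ with $U=0$ on $\partial H$, and by Step~1 (applied pointwise in the interior) $c_1(1+y_N)\le U(y)\le c_2(1+y_N)$ in $H$. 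The crux is now a Liouville-type rigidity: a nonnegative $p$-harmonic function on a half-space that vanishes on the boundary and grows at most linearly must be $U(y)=A(1+y_N)$ for some $A\in[c_1,c_2]$; hence $\nabla U\equiv Ae_N$ and $\frac{\partial u_n(0)}{\partial\nu_n}\to\frac{\partial U(0)}{\partial\nu_*}=A\beta'\ge c_1\beta>0$, contradicting $(1-\ln t_n)^{-1/p}\frac{\partial u(x_n)}{\partial\nu_n}=\frac{\partial u_n(0)}{\partial\nu_n}$. Running the same blow-up without the contradiction hypothesis shows $(1-\ln t_n)^{-1/p}\frac{\partial u(x_n)}{\partial\nu_n}$ stays in $[c_1\beta,c_2]$, giving the lower bound in \eqref{bound=1}. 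The feature specific to $\gamma=1$ is precisely this last point: the blow-up limit solves $-\Delta_p U=0$ rather than $-\Delta_p U=U^{-\gamma}$, so it is pinned only to the linear family $A(1+y_N)$, $A\in[c_1,c_2]$, and not to a single explicit constant — which is exactly why \eqref{bound=1} is a genuine two-sided estimate and not a limit as in \eqref{limit>1}, and why the $p$-harmonic half-space Liouville statement, rather than a uniqueness argument as for \eqref{singular}, is the hard part alongside the logarithmically corrected barrier construction of Step~1.
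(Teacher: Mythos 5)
Your proposal follows the same blow-up strategy as the paper: establish the two-sided estimate $c_1 d_\Omega(x)(L-\ln d_\Omega(x))^{1/p}\le u\le c_2 d_\Omega(x)(L-\ln d_\Omega(x))^{1/p}$ via barriers built on $\phi_1(1-\ln\phi_1)^{1/p}$ and the weak comparison principle adapted to $|\nabla u|^p$ and $u^{-\gamma}$ (the paper's Theorem~\ref{th:asymptotic}); then rescale by $w_n(y)=\delta_n^{-1}(L-\ln\delta_n)^{-1/p}u_n(\delta_n y)$, pass to a $C^{1,\alpha'}_{\rm loc}$ limit, and conclude from the classification of linearly bounded $p$-harmonic functions in a half-space vanishing on the boundary (the paper cites a Liouville-type result) that the blow-up limit is $a\,y_N$ with $a\in[c_1,c_2]$, whence the two-sided estimate \eqref{bound=1} rather than a limit as in \eqref{limit>1}. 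The only genuine difference is that you obtain the gradient bound \eqref{grad_bound=1} directly, by a uniform interior gradient estimate on the rescaled balls $B_{1/2}$, whereas the paper derives it by a second contradiction-and-blow-up; your direct version is slightly cleaner and also immediately delivers the upper half of \eqref{bound=1}, but the underlying mechanism (uniform $C^{1,\alpha}$ estimates for the rescaled problem with vanishing coefficients) is identical. One small caution: the $C^s(\overline\Omega)$ regularity does not follow from the sharp two-sided bound on $u$ alone; as the paper does, one should combine the gradient bound \eqref{grad_bound=1} with the upper bound on $u$ to check that $u^{1/s}$ is Lipschitz.
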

	
	Now we consider the remaining case $0<\gamma<1$, which may be referred to as the ``weakly singular case''. We show that in this case, solutions to our two problems behave like the ones of regular problems, namely, the gradient of solutions is bounded near the boundary and the H\"opf boundary lemma is valid in the classical sense. More precisely, we have the following result.
	\begin{theorem}\label{th:gamma<1} Let $u \in W^{1,p}_{\rm loc}(\Omega) \cap L^\infty(\Omega)$ be a solution to \eqref{main} with $0<\gamma<1$. Then $u\in C^{1,\alpha}(\overline{\Omega})$ for some $0<\alpha<1$. Moreover, for every $\beta\in(0,1)$, there exists $c>0$ such that
		\begin{equation}\label{hopf<1}		
			\frac{\partial u(x)}{\partial \nu}\ge c \quad\text{ for all } x\in \partial\Omega \text{ and } \nu \in \mathbb{S}^{N-1} \text{ with } \langle\nu,\eta(x)\rangle\ge\beta,
		\end{equation}
		where $\eta(x)$ denotes the inward normal of $\Omega$ at $x$.
	\end{theorem}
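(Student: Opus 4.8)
The plan is to mimic the strategy used for the strongly singular case, but now the exponent $\frac{p}{\gamma+p-1}$ exceeds $1$ (since $0<\gamma<1$ forces $\gamma+p-1<p$), so the model profile $d_\Omega(x)^{\frac{p}{\gamma+p-1}}$ is $C^1$ up to the boundary with vanishing gradient there. First I would establish the two-sided estimate $c_1 d_\Omega(x)^{\frac{p}{\gamma+p-1}} \le u(x) \le c_2 d_\Omega(x)^{\frac{p}{\gamma+p-1}}$ on all of $\Omega$; this is the analogue of \eqref{asymp>1} and should be obtained exactly as in Theorem \ref{th:asymptotic} (same sub/supersolution construction near the boundary via the one-dimensional profile, using the weak comparison principle that only involves $|\nabla u|^p$ and $u^{-\gamma}$, and interior positivity for the bulk). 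In particular this already yields $u\in C^{\frac{p}{\gamma+p-1}}(\overline\Omega)$, but since $\frac{p}{\gamma+p-1}>1$ we get more: $u$ is continuous and vanishes on $\partial\Omega$ with a Hölder modulus of order strictly larger than $1$.

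Next I would upgrade to $C^{1,\alpha}(\overline\Omega)$. The idea is a scaling/blow-up argument at a boundary point, in the same spirit as the proof of Theorem \ref{th:gamma>1}(i). Fix $x_0\in\partial\Omega$, let $x_n\to x_0$ with $d_n:=d_\Omega(x_n)\to 0$, and rescale $u_n(y) := d_n^{-\frac{p}{\gamma+p-1}} u(x_n + d_n y)$ on a fixed ball. Using equation \eqref{main}, the power $\frac{p}{\gamma+p-1}$ is chosen precisely so that the singular term $u^{-\gamma}$ scales the same way as $\Delta_p u$; one checks that the gradient term $\vartheta|\nabla u|^q$ picks up a factor $d_n^{\,q(\frac{p}{\gamma+p-1}-1)+\frac{p}{\gamma+p-1}-\frac{p}{\gamma+p-1}\cdot\text{(something)}}$ which, because $\frac{p}{\gamma+p-1}-1>0$ and $q>0$, tends to zero, and the term $f(u)$ is bounded and also rescales to zero. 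Hence $u_n$ converges in $C^{1,\alpha}_{\rm loc}$ of a half-space to the solution $U$ of the limiting half-space problem $-\Delta_p U = U^{-\gamma}$, which by the two-sided bound is comparable to the profile $(\text{const}\cdot t)^{\frac{p}{\gamma+p-1}}$ in the normal variable $t$; this $U$ has bounded gradient up to $\{t=0\}$ (indeed $\nabla U\to 0$ there). The uniform $C^{1,\alpha}$ bounds transfer back to give $|\nabla u(x)|\le C$ near $\partial\Omega$, and combined with interior $C^{1,\alpha}_{\rm loc}$ regularity (Theorem cited earlier / \cite{MR3136107,MR3478284}) and a standard covering argument this yields $u\in C^{1,\alpha}(\overline\Omega)$.

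Finally, for the Hopf-type lower bound \eqref{hopf<1}, I would argue again by contradiction and compactness using the same rescaling. If \eqref{hopf<1} failed there would be $x_n\in\partial\Omega$ and $\nu_n$ with $\langle\nu_n,\eta(x_n)\rangle\ge\beta$ and $\frac{\partial u(x_n)}{\partial\nu_n}\to 0$. Passing to the blow-up limit $U$ on a half-space as above, we would obtain $\frac{\partial U}{\partial\nu}=0$ at an interior-normal direction on the boundary of the half-space, for a solution $U$ of $-\Delta_p U = U^{-\gamma}\ge c>0$ locally that is positive in the interior and vanishes on the flat boundary. The classical Hopf lemma for the $p$-Laplacian with a nonnegative (indeed positive) right-hand side (see \cite{MR768629}) then forces $\frac{\partial U}{\partial\nu}>0$ strictly in every inward cone of directions, a contradiction; uniformity in $x_0\in\partial\Omega$ and in $\nu$ with $\langle\nu,\eta\rangle\ge\beta$ comes from the compactness of $\partial\Omega$ and of the relevant set of directions. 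The main obstacle I anticipate is justifying the convergence of the rescaled equations rigorously: one must check that the rescaled solutions $u_n$ stay uniformly bounded and bounded below away from the flat boundary on compact sets (this is where the two-sided estimate \eqref{asymp>1} is essential), that the right-hand side $u_n^{-\gamma}$ does not concentrate, and that the gradient term genuinely vanishes in the limit for the full range $0<q\le p$ — the borderline $q=p$ needing the strict inequality $\frac{p}{\gamma+p-1}>1$ to save a positive power of $d_n$.
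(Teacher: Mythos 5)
Your first step is incorrect, and the error is fatal to the rest of the argument. For $0<\gamma<1$ the solution does \emph{not} behave like $d_\Omega(x)^{\frac{p}{\gamma+p-1}}$; the correct two-sided estimate, given in Theorem~\ref{th:asymptotic}(iii), is the linear one
\[
c_1\, d_\Omega(x) \;\le\; u(x) \;\le\; c_2\, d_\Omega(x)\quad\text{in }\Omega.
\]
Note that your proposed exponent $\mu=\frac{p}{\gamma+p-1}$ exceeds $1$ here, so $u\approx d_\Omega^\mu$ would force $\nabla u\to 0$ at $\partial\Omega$, and you even observe this yourself (``$\nabla U\to 0$ there''). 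That directly contradicts the H\"opf bound $\partial_\nu u\ge c>0$ you are trying to prove: no blow-up contradiction can emerge from a limit profile whose boundary gradient genuinely vanishes. The reason the power-of-eigenfunction supersolution from the $\gamma>1$ case fails here is visible in the formula for $a_s$ in the proof of Theorem~\ref{th:asymptotic}: the coefficient $\frac{(\gamma-1)(p-1)}{\gamma+p-1}|\nabla\phi_1|^p$ becomes \emph{negative} when $\gamma<1$, so near $\partial\Omega$ (where $\phi_1\to0$ but $|\nabla\phi_1|$ stays bounded below) the barrier is no longer a supersolution. Equivalently, in the one-dimensional model $-(\vert U'\vert^{p-2}U')'=M U^{-\gamma}$ the ansatz $U\sim t^\mu$ with $\mu=\frac{p}{\gamma+p-1}>1$ gives the wrong sign on the left-hand side. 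The correct upper barrier in the weakly singular regime is the annulus profile of Lemma~\ref{lem:U}, which is linear in the distance.

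Once the linear bounds are in hand, the paper's proof of Theorem~\ref{th:gamma<1} is short and does not use any blow-up. From $u\ge c_1 d_\Omega$ one gets $-\Delta_p u \le \frac{1}{u^\gamma}+f(u)\le \frac{M}{d_\Omega(x)^\gamma}$ with $\gamma<1$ (the term $-\vartheta|\nabla u|^q$ is dropped since $\vartheta\ge 0$). A boundary regularity theorem for $p$-Laplace equations with right-hand side controlled by $d_\Omega^{-\gamma}$, $\gamma<1$ (Theorem B.1 of \cite{MR2341518}) then yields $u\in C^{1,\alpha}(\overline\Omega)$. Finally, the H\"opf estimate \eqref{hopf<1} is immediate from the lower bound $u\ge c_1 d_\Omega$ and $u=0$ on $\partial\Omega$: for $x_0\in\partial\Omega$ and $\langle\nu,\eta(x_0)\rangle\ge\beta$, the difference quotient $u(x_0+t\nu)/t\ge c_1\, d_\Omega(x_0+t\nu)/t\to c_1\langle\nu,\eta(x_0)\rangle\ge c_1\beta$ as $t\to0^+$, using the $C^2$ regularity of $\partial\Omega$. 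No compactness or half-space limit is needed.
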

	
	\begin{remark}
		Notice that we cannot expect an explicit universal value of $\frac{\partial u(x)}{\partial \nu}$ for $x\in \partial\Omega$ as in Theorem \ref{th:gamma>1} because it depends on $\Omega$.
		Indeed, let $u$ be the unique solution to \eqref{singular} with $f\equiv0$. Then $w$ defined by
		\[
		w(x) = \delta^{-\frac{p}{\gamma+p-1}} u(\delta x) \quad \text{ in } \Omega
		\]
		solves the same equation posed in $\Omega/\delta$, where $\delta>0$, and
		\[
		\frac{\partial w(x)}{\partial \nu} = \delta^{\frac{\gamma-1}{\gamma+p-1}} \frac{\partial u(\delta x)}{\partial \nu} \quad\text{ for all } x\in \partial(\Omega/\delta).
		\]
	\end{remark}
	
	The H\"opf boundary lemma is a fundamental tool in the moving plane method as well as in many other applications. This tool has been used in \cite{MR4044739} to prove the symmetry of solutions to problem \eqref{singular} with $\gamma>1$. In the same manner, we have the following symmetry result for problems \eqref{main} with all $\gamma>0$ in bounded symmetric convex domains.
	\begin{theorem}\label{th:symmetry}
		Assume $\max\{\frac{p}{2},1\}\le q\le p$ and $\gamma>0$.
		Let $\Omega$ be a bounded smooth domain of $\mathbb{R}^N$ which is strictly convex in the $x_1$-direction and symmetric with respect to the hyperplane $\{x_1=0\}$. Let $u\in C^{1,\alpha}_{\rm loc}(\Omega)\cap C(\overline{\Omega})$ be a solution of problem \eqref{main} or \eqref{singular} with $f(s) >0$ for $s >0$. Then it follows that $u$ is symmetric with respect to the hyperplane $\{x_1=0\}$ and increasing in the $x_1$-direction in $\Omega\cap\{x_1<0\}$.
		In particular, if the domain is a ball, then the solution is radially symmetric and monotone decreasing about the center of the ball.
	\end{theorem}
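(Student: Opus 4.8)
The plan is to run a moving plane argument of Gidas--Ni--Nirenberg type in the $x_1$-direction, adapted to the degenerate operator $-\Delta_p$ and to the singular, gradient-dependent right-hand side. Put $a:=\inf\{x_1:x\in\Omega\}<0$ and, for $\lambda\in(a,0]$, set $\Omega_\lambda:=\Omega\cap\{x_1<\lambda\}$, $x^\lambda:=(2\lambda-x_1,x_2,\dots,x_N)$ and $u_\lambda(x):=u(x^\lambda)$. Strict convexity of $\Omega$ in the $x_1$-direction together with the symmetry about $\{x_1=0\}$ guarantees that each cap $\Omega_\lambda$ is connected and that its reflection $(\Omega_\lambda)^\lambda$ lies in $\Omega$, so $u_\lambda$ is well defined on $\Omega_\lambda$; since the equation in \eqref{main} is autonomous and invariant under reflection in $x_1$, the function $u_\lambda$ solves the same equation on $\Omega_\lambda$, with $u_\lambda=u$ on $\Omega\cap\{x_1=\lambda\}$ and $u_\lambda>0=u$ on the part of $\partial\Omega$ with $x_1\le\lambda$. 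The objective is to prove that $u\le u_\lambda$ in $\Omega_\lambda$ for every $\lambda\in(a,0]$; the symmetric argument from the right then forces $u(x_1,x')=u(-x_1,x')$, the family of inequalities $u\le u_\lambda$ with $\lambda\le 0$ gives the monotonicity in $\Omega\cap\{x_1<0\}$, and the ball case follows by applying this in all directions.

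I would first check that $u\le u_\lambda$ in $\Omega_\lambda$ for $\lambda$ in a right neighborhood of $a$. For such $\lambda$ the cap $\Omega_\lambda$ and its reflection lie in a neighborhood of the ``left'' part of $\partial\Omega$, where the inward normal is nearly parallel to $e_1$ and $d_\Omega$ is small, so $\partial u/\partial x_1>0$ there by Corollary \ref{co:gamma>1} if $\gamma>1$, by Theorem \ref{th:gamma=1}(i) if $\gamma=1$, and by \eqref{hopf<1} if $0<\gamma<1$; integrating this along the segments parallel to $e_1$ that join $x\in\Omega_\lambda$ to $x^\lambda\in\Omega$ yields $u(x)\le u(x^\lambda)=u_\lambda(x)$. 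Then let $\Lambda$ be the supremum of the $\mu\in(a,0]$ for which $u\le u_\lambda$ in $\Omega_\lambda$ for all $\lambda\in(a,\mu]$; by continuity $u\le u_\Lambda$ in $\Omega_\Lambda$, and it remains to rule out $\Lambda<0$. On every compact subset of $\Omega_\Lambda$ the gradient of $u$ is bounded and $u$ is bounded below by a positive constant, so there $w_\Lambda:=u_\Lambda-u\ge 0$ satisfies a linear elliptic differential inequality with bounded coefficients, the zeroth order one arising from the decreasing map $s\mapsto s^{-\gamma}$ and from the locally Lipschitz $f$; the strong maximum principle for $p$-Laplace type operators --- in a version tolerating a possibly nontrivial critical set $\{\nabla u=0\}$ --- then gives $w_\Lambda>0$ in the interior of $\Omega_\Lambda$, whence $w_\Lambda\ge c_0>0$ on any prescribed compact $K\subset\subset\Omega_\Lambda$.

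Now suppose, for contradiction, that $\Lambda<0$ and fix $K\subset\subset\Omega_\Lambda$ with $|\Omega_\Lambda\setminus K|$ small. For $\lambda>\Lambda$ close to $\Lambda$, continuity yields $u\le u_\lambda$ on $K$, so it suffices to show that $w_\lambda^-:=(u_\lambda-u)^-$ vanishes on the thin set $A_\lambda:=\Omega_\lambda\setminus K$, which is a layer along $\partial\Omega$ together with a layer along $\{x_1=\lambda\}$, both of small measure. Subtracting the equations for $u_\lambda$ and $u$, testing with $w_\lambda^-$ (legitimate, since it vanishes near $\partial\Omega$, where $u_\lambda>u=0$), and using the strict monotonicity of $\xi\mapsto|\xi|^{p-2}\xi$ produces the coercive quantity $\mathcal{E}:=c\int_{A_\lambda}(|\nabla u_\lambda|+|\nabla u|)^{p-2}|\nabla w_\lambda^-|^2$ (with the customary modification when $1<p<2$), while the singular term contributes, on $\{u>u_\lambda\}$, the \emph{positive} absorbing term $\mathcal{S}:=\gamma\int_{A_\lambda}u^{-\gamma-1}(w_\lambda^-)^2$, thanks to $u_\lambda^{-\gamma}-u^{-\gamma}\ge\gamma u^{-\gamma-1}(u-u_\lambda)$. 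Against $\mathcal{E}+\mathcal{S}$ one has to absorb the $f$-contribution, bounded by $C_f\int_{A_\lambda}(w_\lambda^-)^2$, and the gradient contribution, bounded by $C\vartheta\int_{A_\lambda}(|\nabla u_\lambda|+|\nabla u|)^{q-1}|\nabla w_\lambda^-|\,w_\lambda^-$: writing $q-1=\tfrac{p-2}{2}+(q-\tfrac{p}{2})$ --- which requires exactly $q\ge\tfrac{p}{2}$ --- and applying Young's inequality moves a fraction $\varepsilon\mathcal{E}$ of the gradient term into the coercive term and leaves $C_\varepsilon\int_{A_\lambda}(|\nabla u_\lambda|+|\nabla u|)^{2q-p}(w_\lambda^-)^2$. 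On the portion of $A_\lambda$ away from $\partial\Omega$ the weights $u^{-\gamma-1}$, $(|\nabla u_\lambda|+|\nabla u|)^{2q-p}$ and $1$ are all bounded, and the corresponding $L^2$-norms of $w_\lambda^-$ are absorbed into $\mathcal{E}$ through a weighted Sobolev--Poincar\'e inequality in the spirit of Damascelli--Sciunzi, whose constant tends to $0$ as $|A_\lambda|\to 0$; on the portion near $\partial\Omega$ the boundary rates in \eqref{asymp>1}, \eqref{grad_bound>1} and their counterparts for $\gamma\le1$ show that $u^{-\gamma-1}$ blows up strictly faster than $(|\nabla u_\lambda|+|\nabla u|)^{2q-p}$ (a computation reducing to $2q+2\gamma(p-q)>0$), so $C_\varepsilon(|\nabla u_\lambda|+|\nabla u|)^{2q-p}+C_f\le\tfrac{\gamma}{2}u^{-\gamma-1}$ there and the remaining term is absorbed into $\mathcal{S}$. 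It follows that $\mathcal{E}=\mathcal{S}=0$, hence $w_\lambda^-\equiv 0$ on $A_\lambda$ and therefore on all of $\Omega_\lambda$, contradicting the maximality of $\Lambda$; so $\Lambda=0$. Finally, the strict inequality $u<u_\lambda$ in $\Omega_\lambda$ for each $\lambda\in(a,0)$, again from the strong maximum principle, gives the strict $x_1$-monotonicity, and combining with the reflected argument produces the asserted symmetry.

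The main obstacle is the core estimate just described. Reconciling the gradient term $\vartheta(|\nabla u_\lambda|^q-|\nabla u|^q)$ with the degenerate coercivity of $-\Delta_p$ is precisely what forces the range $\max\{\tfrac{p}{2},1\}\le q\le p$, and the set $\{\nabla u=0\}$, on which the operator fails to be uniformly elliptic, has to be handled by the weighted Sobolev machinery rather than by a naive Poincar\'e inequality. Everything else is either a standard uniformly elliptic moving plane argument away from $\partial\Omega$, or a direct consequence of the sharp boundary estimates already established; in particular the singularity $1/u^\gamma$, which is the source of all the difficulties in the rest of the paper, here works \emph{in our favour}: it supplies the dominant absorbing zeroth order term near $\partial\Omega$, exactly where an autonomous problem would otherwise be hard to close.
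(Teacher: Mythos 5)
Your overall strategy --- moving planes in the $x_1$-direction, using the boundary derivative estimates of Corollary \ref{co:gamma>1}, Theorem \ref{th:gamma=1} and Theorem \ref{th:gamma<1} to start the plane and to control the boundary layer, a strong comparison principle on compacta away from the critical set $Z_u$, and a weak comparison principle in small domains to push the plane to $\lambda=0$ --- is the same as the paper's. The paper simply invokes the small-domain weak comparison principle of \cite[Proposition 4.1]{MR4022661} (which is where the restriction $\max\{\frac p2,1\}\le q\le p$ actually enters), while you rederive it by hand via the split $q-1=\frac{p-2}{2}+\bigl(q-\frac p2\bigr)$ in Young's inequality and propose to absorb the near-boundary contribution into the zeroth-order term $\gamma\int u^{-\gamma-1}(w_\lambda^-)^2$ produced by the convexity of $s\mapsto s^{-\gamma}$. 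That absorption idea is a genuinely different way to close the estimate near $\partial\Omega$.

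There is, however, a gap in your treatment of the boundary layer. You justify testing with $w_\lambda^-=(u_\lambda-u)^-$ by asserting that ``it vanishes near $\partial\Omega$, where $u_\lambda>u=0$,'' but this is exactly the delicate point: near the corner $\partial\Omega\cap T_\lambda$ both $u$ and $u_\lambda$ vanish, at the same rate, so it is not automatic that $w_\lambda^-$ vanishes in a full neighborhood of $\partial\Omega$, nor even that $w_\lambda^-\in W^{1,p}_0(\Omega_\lambda)$ --- recall that $u\notin W^{1,p}_0(\Omega)$ once $\gamma\ge\frac{2p-1}{p-1}$, so near $\partial\Omega$ the individual gradients need not be $p$-integrable. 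The paper resolves exactly this point in step \eqref{s3}: using the directional-derivative estimates near $\partial\Omega\cap T_{\lambda_0+\tau}$ (where strict convexity of $\Omega$ forces $\langle e_1,\eta(x)\rangle>0$), one first proves $u<u_{\lambda_0+\tau}$ throughout $I_\delta(\Omega)\cap\Omega_{\lambda_0+\tau}$, so that the test function $(u-u_{\lambda_0+\tau})^+$ has \emph{compact} support in $\Omega_{\lambda_0+\tau}$, and only then is the weak comparison principle applied. Your absorption argument cannot substitute for this step, because you must already know that $w_\lambda^-$ is an admissible test function before you can integrate by parts at all; as written, the inequality you want to prove is being used to justify its own derivation. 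Once you insert the boundary-layer argument of step \eqref{s3} to obtain compact support (and note that the bound $\vartheta\bigl||\nabla u_\lambda|^q-|\nabla u|^q\bigr|\le q\vartheta\,(|\nabla u_\lambda|+|\nabla u|)^{q-1}|\nabla w_\lambda^-|$ that you use tacitly requires $q\ge1$, the other half of the hypothesis), the rest of your contradiction argument is sound and agrees with the paper.
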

	
	The rest of this paper is organized as follows. In Section \ref{sect2}, we prove an asymptotic estimate of solutions near the boundary. Through the proof, we also deduce that solutions must lie in the space $C^{1,\alpha}_{\rm loc}(\Omega)\cap C(\overline{\Omega})$. In Section \ref{sect3}, we prove our main result, namely, Theorems \ref{th:gamma>1}, \ref{th:gamma=1} and \ref{th:gamma<1} on the behaviors of the derivatives of solutions near the boundary. In Section \ref{sect4}, we apply previous results to show the symmetry of solutions stated in Theorem \ref{th:symmetry}. %Section \ref{sect5} is devoted to a proof of the local boundedness of solutions mentioned in Theorem \ref{th:regularity}.

	\section{Sharp bounds for solutions}\label{sect2}
	We aim to prove the following result in this section.
	\begin{theorem}\label{th:asymptotic}
		Let $u \in W^{1,p}_{\rm loc}(\Omega) \cap L^\infty(\Omega)$ be a solution to \eqref{main} with $\gamma>0$. Then $u\in C^{1,\alpha}_{\rm loc}(\Omega)\cap C(\overline{\Omega})$ for some $0<\alpha<1$. Moreover, the following assertions hold:
		\begin{enumerate}
			\item[(i)] When $\gamma>1$, there exist $c_1,c_2>0$ such that
			\[
			c_1 d_\Omega(x)^\frac{p}{\gamma+p-1} \le u(x) \le c_2 d_\Omega(x)^\frac{p}{\gamma+p-1} \quad\text{ for } x\in \Omega.
			\]
			\item[(ii)] When $\gamma=1$, there exist $L,c_1,c_2>0$ such that
			\[
			c_1 d_\Omega(x) \left(L-\ln d_\Omega(x)\right)^\frac{1}{p} \le u(x) \le c_2 d_\Omega(x) \left(L-\ln d_\Omega(x)\right)^\frac{1}{p} \quad\text{ for } x\in \Omega.
			\]
			\item[(iii)] When $0<\gamma<1$, there exist $c_1,c_2>0$ such that
			\[
			c_1 d_\Omega(x) \le u(x) \le c_2 d_\Omega(x) \quad\text{ for } x\in \Omega.
			\]
		\end{enumerate}
	\end{theorem}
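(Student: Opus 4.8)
\emph{Overview.} The plan is to get the interior regularity from standard quasilinear theory, and then to pin down the boundary behaviour by trapping $u$ between explicit sub- and supersolutions built out of the distance function; the only genuinely new ingredient is a comparison principle that tolerates the first-order term $\vartheta|\nabla u|^q$.

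\emph{Step 1 (interior regularity).} Fix $K\subset\subset\Omega$. Since $\essinf_K u>0$ and $u\le\|u\|_{L^\infty(\Omega)}$, the term $u^{-\gamma}+f(u)$ is bounded on $K$, so on $K$ the equation reads $-\Delta_p u=G(x,\nabla u)$ with $|G(x,\xi)|\le C_K(1+|\xi|^q)$ and $q\le p$. Bounded weak solutions of quasilinear equations with such natural growth are locally $C^{1,\alpha}$ by the regularity theory of Tolksdorf, DiBenedetto and Lieberman (the $L^\infty$ bound on $u$ is exactly what makes the critical growth $|\xi|^p$ admissible when $q=p$); hence $u\in C^{1,\alpha}_{\rm loc}(\Omega)$. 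Continuity up to $\partial\Omega$ will follow from the two-sided bound obtained below.

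\emph{Step 2 (upper bound).} Since $\vartheta|\nabla u|^q\ge0$, dropping this term in the weak formulation shows that $u$ is a weak subsolution of the gradient-free equation $-\Delta_p w=w^{-\gamma}+f(w)$. Work in a one-sided tubular neighbourhood $I_\delta(\Omega)$ on which $d:=d_\Omega$ is $C^2$ with $|\nabla d|\equiv1$ and $|\Delta d|$ bounded, and look for a supersolution $\overline u=A\,\Psi_\gamma(d)$ of that equation, where $\Psi_\gamma(t)=t^{p/(\gamma+p-1)}$ if $\gamma>1$, $\Psi_\gamma(t)=t(L-\ln t)^{1/p}$ if $\gamma=1$, and, for $0<\gamma<1$, $\Psi_\gamma(t)=t$ perturbed by a lower-order correction (alternatively a global supersolution coming from the mildly singular regular theory). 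A one-dimensional computation shows that, for these profiles, the leading singular part of $-\Delta_p(\Psi_\gamma(d))$ is a positive multiple of $\Psi_\gamma(d)^{-\gamma}$ (for $\gamma>1$ the constant is $\lambda^{p-1}(1-\lambda)(p-1)$ with $\lambda=\tfrac{p}{\gamma+p-1}$, consistent with the constant in \eqref{limit>1}), while the curvature term and $f(\overline u)$ are of strictly lower order as $d\to0$; choosing $A$ large, $\delta$ small (and $L$ large when $\gamma=1$) turns $\overline u$ into a supersolution on $I_\delta(\Omega)$ with $\overline u\ge u$ on $\partial I_\delta(\Omega)$ — using $u=0=\overline u$ on $\partial\Omega$ in the weak sense and $\overline u\ge\|u\|_{L^\infty(\Omega)}\ge u$ on $\{d=\delta\}$. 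The classical weak comparison principle for $-\Delta_p$ with nonincreasing right-hand side (as in \cite{MR4044739}) then gives $u\le\overline u$ in $I_\delta(\Omega)$, hence $u\le c_2\Psi_\gamma(d)$ in $\Omega$.

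\emph{Step 3 (lower bound) and the main obstacle.} For the lower bound one cannot discard the gradient term. When $q=p$, the substitution $v=\tfrac{p-1}{\vartheta}\bigl(1-e^{-\vartheta u/(p-1)}\bigr)$ removes it exactly: $v$ solves $-\Delta_p v=e^{-\vartheta u}\bigl(u^{-\gamma}+f(u)\bigr)$, with $v\asymp u$ near $0$, so $v$ is a supersolution of $-\Delta_p w=a(x)w^{-\gamma}+g(x,w)$ with $a$ trapped between positive constants and $g$ bounded, and one argues as in Step 2 with a subsolution $\underline v=a_0\Psi_\gamma(d)$ ($a_0$ small) and the classical comparison principle; then $u=\Phi^{-1}(v)\ge\Phi^{-1}(\underline v)\asymp\Psi_\gamma(d)$. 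When $0<q<p$, Young's inequality $\vartheta t^q\le\Theta t^p+C_\Theta$ shows that $u$ is a weak supersolution of $-\Delta_p w+\Theta|\nabla w|^p=w^{-\gamma}+f(w)-C_\Theta$, and here one must prove and use a weak comparison principle for the operator $w\mapsto-\Delta_p w+\Theta|\nabla w|^p$ against a nonincreasing-plus-Lipschitz right-hand side on the thin collar $I_\delta(\Omega)$, comparing $u$ with the subsolution $\underline u=a_0\Psi_\gamma(d)$ (whose gradient is explicitly $O(d^{\lambda-1})$, so that $\Theta|\nabla\underline u|^p$ is of strictly lower order than $\underline u^{-\gamma}$). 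Combining Steps 2 and 3 yields (i)--(iii); and since $0\le c_1\Psi_\gamma(d)\le u\le c_2\Psi_\gamma(d)$ with $\Psi_\gamma(d)\to0$ as $d\to0$, together with $u\in C^{1,\alpha}_{\rm loc}(\Omega)$, we conclude $u\in C(\overline\Omega)$.

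The hard part is precisely the weak comparison principle needed in Step 3: a global comparison principle for \eqref{main} fails because $|\nabla u|^q$ and $u^{-\gamma}$ are both unbounded near $\partial\Omega$, the first-order term destroys the monotone/variational structure of $-\Delta_p$, and $w\mapsto w^{-\gamma}$ is not Lipschitz. Testing the two inequalities with a truncation of $(\underline u-u)^+$ supported in $I_\delta(\Omega)$, the difference of the $p$-Laplacians has the good sign, the singular difference $\underline u^{-\gamma}-u^{-\gamma}<0$ on $\{\underline u>u\}$ is favourable, and the Lipschitz part of $f$ is absorbed using the Poincar\'e constant of the thin strip $I_\delta(\Omega)$, which tends to $0$ as $\delta\to0$. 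The delicate point is the first-order difference $\Theta\int(|\nabla\underline u|^p-|\nabla u|^p)(\underline u-u)^+$, which involves $|\nabla u|^p$, not yet known to be integrable against a good weight near $\partial\Omega$; this is controlled by an energy (Caccioppoli-type) estimate obtained by testing the equation for $u$ itself with a weight vanishing on $\partial\Omega$, which bounds $\int_{I_\delta(\Omega)}|\nabla u|^p\,(\underline u-u)_+^2$ by quantities that are small for $\delta$ small, thereby closing the estimate and forcing $(\underline u-u)^+\equiv0$.
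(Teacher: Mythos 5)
Your overall strategy --- interior regularity plus barriers and comparison principles on a collar $I_\delta(\Omega)$ --- matches the paper's. You also found the right substitution $v = e^{-\vartheta u/(p-1)}$ (up to affine normalization) that linearizes away the Hamilton--Jacobi term. But there is a genuine gap in Step 3, and it is exactly where you flag the difficulty.

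For $0<q<p$ you reduce, via Young's inequality, to showing that $u$ is a supersolution of $-\Delta_p w + \Theta|\nabla w|^p = w^{-\gamma}+f(w)-C_\Theta$, and then propose a \emph{direct} testing argument for a comparison principle for $-\Delta_p + \Theta|\nabla\cdot|^p$, involving truncations, thin-strip Poincar\'e, and a Caccioppoli bound on $\int|\nabla u|^p(\underline u - u)_+^2$. You yourself call this ``the hard part''; it is not carried out, and in particular the estimate on the first-order difference is only sketched. The paper closes precisely this gap differently: it observes the elementary inequality $|\nabla u|^q < |\nabla u|^p + 1$ (valid for all $0<q\le p$), which upgrades $u$ to a supersolution of $-\Delta_p w + \vartheta|\nabla w|^p + \vartheta \ge \tfrac{1}{2}w^{-\gamma}$ near $\partial\Omega$, and then applies the \emph{same} exponential substitution you already identified for $q=p$ (its Lemma \ref{lem:wcp_singular}). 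After the substitution the problem becomes $\Delta_p v_i \lessgtr g(v_i)$ with no gradient term, and the classical comparison principle applies once $g$ is monotone on the relevant range --- which forces the additional hypothesis $u_1 \le (c/\vartheta)^{1/\gamma}$ (arranged by shrinking the barrier constant $s$ and the collar width $\delta$), a point your outline does not address. In short: you have both ingredients (Young/elementary bound on $|\nabla u|^q$, and the exponential change of variables) but fail to combine them, and your alternative route is not completed.

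Two smaller discrepancies. First, for the $0<\gamma<1$ upper bound, the paper notes explicitly that the eigenfunction-power barrier ``simply do[es] not work'' and instead uses the radial annulus solution $U$ of Lemma \ref{lem:U}, together with an exterior-ball argument, to get $u\le Cd_\Omega$; your phrase ``$\Psi_\gamma(t)=t$ perturbed by a lower-order correction'' is plausible in spirit (e.g.\ $At - Bt^\mu$ with $1<\mu\le 2-\gamma$ might be made to work) but is not worked out, and the curvature terms and $f(\overline u)$ would need to be estimated. Second, the paper's barriers are built from $\phi_1$ rather than $d_\Omega$; this is cosmetic but convenient, as it avoids explicit curvature computations (only the Hopf lemma for $\phi_1$ and $-\Delta_p\phi_1 = \lambda_1\phi_1^{p-1}$ are used). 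Your observation in Step 1 that the $L^\infty$ bound makes the critical growth $q=p$ admissible for interior $C^{1,\alpha}$ regularity is correct and in fact more careful than the paper's one-line statement.

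\end{document}
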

	
	Typically, in (ii), we can choose any $L > \max_{x\in\Omega} \ln d_\Omega(x)$, then there exist $c_1,c_2>0$, which depend on $L$, such that (ii) holds.
	
	The proof of Theorem \eqref{th:asymptotic} exploits various types of weak comparison principles.
	The first one we require in our later arguments is the following.
	\begin{lemma}[Lemma 3.1 in \cite{MR4044739}]\label{lem:wcp}
		Let $c>0$.
		If $u_1,u_2\in C^{1,\alpha}_{\rm loc}(\Omega) \cap L^\infty(\Omega)$ satisfy
		\[
		\begin{cases}
			-\Delta_p u_1 \le \frac{c}{u_1^\gamma} &\text{ in } \Omega,\\
			-\Delta_p u_2 \ge \frac{c}{u_2^\gamma} &\text{ in } \Omega,\\
			u_1,u_2>0 &\text{ in } \Omega,\\
			u_1 \le u_2 &\text{ on } \partial\Omega,
		\end{cases}
		\]
		then $v_1 \le v_2$ in $\Omega$.
	\end{lemma}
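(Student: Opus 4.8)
My plan is to run a standard weak comparison scheme, the two structural ingredients being that $t\mapsto c\,t^{-\gamma}$ is strictly decreasing on $(0,\infty)$ and that the field $\xi\mapsto|\xi|^{p-2}\xi$ is strictly monotone on $\mathbb{R}^N$. Fix $\varepsilon>0$ and set $\psi_\varepsilon:=(u_1-u_2-\varepsilon)^+$; the goal is to prove $\psi_\varepsilon\equiv0$ in $\Omega$ for every $\varepsilon>0$. By the weak meaning of the hypothesis ``$u_1\le u_2$ on $\partial\Omega$'' we have $\psi_\varepsilon\in W^{1,p}_0(\Omega)$. The only region that matters is $\{\psi_\varepsilon>0\}=\{u_1>u_2+\varepsilon\}$: there $u_1>\varepsilon$ (since $u_2>0$), hence $c\,u_1^{-\gamma}\le c\,\varepsilon^{-\gamma}$ is bounded, and $c\,u_1^{-\gamma}-c\,u_2^{-\gamma}<0$ because $u_1>u_2$.

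The heart of the proof is to use $\psi_\varepsilon$ as a test function in the two weak (in)equalities and subtract them. Since $\nabla\psi_\varepsilon=\nabla(u_1-u_2)$ a.e.\ on $\{\psi_\varepsilon>0\}$ and $\nabla\psi_\varepsilon=0$ a.e.\ elsewhere, this yields
\[
0\le\int_{\{\psi_\varepsilon>0\}}\big\langle|\nabla u_1|^{p-2}\nabla u_1-|\nabla u_2|^{p-2}\nabla u_2,\ \nabla u_1-\nabla u_2\big\rangle\le\int_{\{\psi_\varepsilon>0\}}\Big(\frac{c}{u_1^\gamma}-\frac{c}{u_2^\gamma}\Big)\psi_\varepsilon\le0,
\]
where the first inequality uses monotonicity of $\xi\mapsto|\xi|^{p-2}\xi$, the second is the subtracted weak relation, and the last is the sign discussion above. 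Therefore the middle integrand vanishes a.e.; strict monotonicity forces $\nabla u_1=\nabla u_2$ a.e.\ on $\{\psi_\varepsilon>0\}$, so $\nabla\psi_\varepsilon\equiv0$ in $\Omega$, and since $\psi_\varepsilon\in W^{1,p}_0(\Omega)$, Poincar\'e's inequality gives $\psi_\varepsilon\equiv0$, i.e.\ $u_1\le u_2+\varepsilon$ in $\Omega$. Letting $\varepsilon\to0^+$ and using the continuity of $u_1,u_2$ in $\Omega$ gives $u_1\le u_2$ in $\Omega$.

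I expect the main obstacle to be the legitimacy of $\psi_\varepsilon$ as a test function: the weak formulations are only assumed against $\varphi\in C^1_c(\Omega)$, whereas $\psi_\varepsilon\in W^{1,p}_0(\Omega)$ need not be compactly supported and $|\nabla u_i|$ may blow up at $\partial\Omega$. I would resolve this by localization: test against $\psi_\varepsilon\chi$, where $\chi\in C^\infty_c(\Omega)$ with $0\le\chi\le1$ equals $1$ outside a thin boundary layer $I_\delta(\Omega)$ and vanishes near $\partial\Omega$; this product is compactly supported in $\Omega$, where $u_i\in W^{1,p}_{\rm loc}(\Omega)$ and $u_i$ is bounded below, so passing from $C^1_c(\Omega)$ test functions to $\psi_\varepsilon\chi$ is a routine density step. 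Subtracting the two resulting relations produces the displayed inequality together with a commutator term $\int_\Omega\psi_\varepsilon\langle|\nabla u_1|^{p-2}\nabla u_1-|\nabla u_2|^{p-2}\nabla u_2,\nabla\chi\rangle$, supported in the boundary layer, and the delicate point is to let $\delta\to0^+$ while showing this term vanishes. For that I would use that $\psi_\varepsilon\in W^{1,p}_0(\Omega)$ vanishes on $\partial\Omega$, the Hardy inequality $\|\psi_\varepsilon/d_\Omega\|_{L^p(\Omega)}\le C\|\nabla\psi_\varepsilon\|_{L^p(\Omega)}$ (so that the bound $|\nabla\chi|\le C/\delta\le C'/d_\Omega$ on the layer converts the commutator into an integral against a weight that is the tail of an $L^p(\Omega)$ function), together with the summability of $|\nabla u_i|$ near $\partial\Omega$ provided by the equation and the relation $\nabla u_1-\nabla u_2=\nabla\psi_\varepsilon\in L^p(\Omega)$ on $\{\psi_\varepsilon>0\}$, which lets one gradient be controlled in terms of the other. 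Once the commutator is shown to vanish, the displayed inequality is rigorous and the proof concludes. Finally, I would stress that this lemma concerns only the pure $p$-Laplacian plus the singular term — no $|\nabla u|^q$ and no $f$ — which is exactly why it can serve as the replacement, later in the paper, for the full comparison principle that is unavailable for \eqref{main}.
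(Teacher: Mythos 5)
Your core argument --- test with $\psi_\varepsilon=(u_1-u_2-\varepsilon)^+$, subtract the two weak relations, exploit monotonicity of $\xi\mapsto|\xi|^{p-2}\xi$ and of $t\mapsto c\,t^{-\gamma}$, then conclude $\nabla\psi_\varepsilon\equiv0$ and $\psi_\varepsilon\equiv0$ via Poincar\'e --- is the standard comparison scheme and is surely what the cited reference does. The paper itself supplies no proof of Lemma \ref{lem:wcp}: it cites Lemma 3.1 of \cite{MR4044739} and only remarks that ``its proof there is still valid in our setting'' even though the reference works with stronger regularity. So the interesting question is whether your argument actually closes under the weaker hypotheses of this paper, and you correctly isolate the one delicate point.

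That point --- legitimizing $\psi_\varepsilon\in W^{1,p}_0(\Omega)$ as a test function when the weak formulation is posed against $C^1_c(\Omega)$ and $u_1,u_2$ are only in $W^{1,p}_{\rm loc}(\Omega)$ --- is not, however, fully closed by your sketch. After H\"older, the commutator introduced by your cutoff is controlled by $\|\psi_\varepsilon/d_\Omega\|_{L^p(\text{layer})}$ times $\bigl(\|\nabla u_1\|^{p-1}_{L^p(\text{layer})}+\|\nabla u_2\|^{p-1}_{L^p(\text{layer})}\bigr)$; Hardy makes the first factor vanish as the layer shrinks, but only if the second stays bounded, i.e.\ only if $\nabla u_i\in L^p$ on a fixed neighborhood of $\partial\Omega$. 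Your assertion that this ``summability of $|\nabla u_i|$ near $\partial\Omega$ [is] provided by the equation'' is not justified and in fact cannot hold in general: the unlabeled corollary following Theorem \ref{th:gamma>1} shows that $u\in W^{1,p}_0(\Omega)$ if and only if $\gamma<\frac{2p-1}{p-1}$, so for large $\gamma$ the very solutions to which this lemma is applied have $\nabla u\notin L^p(\Omega)$. The fallback of using $\nabla u_1-\nabla u_2=\nabla\psi_\varepsilon\in L^p$ on $\{\psi_\varepsilon>0\}$ to control one gradient by the other only transfers the blow-up, it does not remove it. To close the gap one must either exploit that in this paper's actual applications one of $u_1,u_2$ is an explicit barrier with known gradient asymptotics near $\partial\Omega$ (and verify the shell integrability by hand), or argue that $\{u_1>u_2+\varepsilon\}$ is compactly contained in $\Omega$ --- which amounts to boundary continuity of $u_1-u_2$, i.e.\ precisely the ``stronger regularity'' the paper waives. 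As written, your proposal correctly identifies the obstacle but leaves it open.
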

	Lemma \ref{lem:wcp} is stated in \cite{MR4044739} for functions with stronger regularity, but its proof there is still valid in our setting. This lemma is useful for problem \eqref{singular}. However, to deal with the gradient term $|\nabla u|^q$ in problem \eqref{main}, the principle stated in Lemma \ref{lem:wcp} is not enough.
	Therefore, we start this subsection with another weak comparison principle, which is of independent interest.
	
	\begin{lemma}[Weak comparison principle]\label{lem:wcp_singular}
		Let $c>0$ and let $u_1,u_2\in C^{1,\alpha}_{\rm loc}(\Omega) \cap L^\infty(\Omega)$ be such that
		\[
		\begin{cases}
			-\Delta_p u_1 + \vartheta |\nabla u_1|^p + \vartheta \le \frac{c}{u_1^\gamma} &\text{ in } \Omega,\\
			-\Delta_p u_2 + \vartheta |\nabla u_2|^p + \vartheta \ge \frac{c}{u_2^\gamma} &\text{ in } \Omega,\\
			u_1,u_2 > 0 &\text{ in } \Omega,\\
			u_1 \le u_2 &\text{ on } \partial \Omega.
		\end{cases}
		\]
		If $\vartheta>0$, we further assume
		\[
		u_1 \le \left( \frac{c}{\vartheta} \right)^{1/\gamma} \quad\text{ in } \Omega.
		\]
		Then $u_1 \le u_2$ in $ \Omega$.
	\end{lemma}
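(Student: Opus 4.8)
The plan is to use a Picone-type / "hidden convexity" argument adapted to the $p$-Laplacian, exploiting the fact that the gradient term $\vartheta|\nabla u|^p$ combined with the singular term $\frac{c}{u^\gamma}$ can be absorbed into a single monotone structure. The key observation is that the substitution $w_i = \frac{1}{\vartheta}\bigl(1 - e^{-\vartheta u_i}\bigr)$ (a Hopf–Cole type transformation when $p=2$; for general $p$ one uses $v_i = \phi(u_i)$ with $\phi' = e^{-\vartheta u_i /(p-1)}$ or a similar exponent) linearizes the gradient term: a direct computation gives $-\Delta_p v_i \le (\text{resp.} \ge)$ a quantity that no longer contains $|\nabla v_i|^p$, at the cost of a monotone-in-$u_i$ factor multiplying $\frac{c}{u_i^\gamma} - \vartheta$. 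First I would carry out this computation carefully to see that the transformed functions $v_1, v_2$ satisfy
\[
-\Delta_p v_1 \le g(v_1), \qquad -\Delta_p v_2 \ge g(v_2) \quad\text{in }\Omega,
\]
with $v_1 \le v_2$ on $\partial\Omega$, where $g$ is continuous and the map $s \mapsto g(s)$ has the right sign properties (this is exactly where the extra hypothesis $u_1 \le (c/\vartheta)^{1/\gamma}$ enters, ensuring $\frac{c}{u_1^\gamma} - \vartheta \ge 0$ on the relevant set so that the absorbed term does not flip sign).

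Second, with the gradient term removed, I would apply a weak comparison principle of the type in Lemma \ref{lem:wcp} — or reprove it directly — for the transformed inequalities. The standard route is to test the difference of the two weak formulations with $\varphi = (v_1 - v_2 - \varepsilon)^+$, which is a legitimate test function because $v_1 \le v_2$ on $\partial\Omega$ forces $(v_1 - v_2 - \varepsilon)^+ \in W^{1,p}_0(\Omega)$ and it has compact support away from $\partial\Omega$ (here one uses that $u_i$, hence $v_i$, is locally Lipschitz in $\Omega$, so all integrals converge despite the boundary singularities of $\frac{1}{u^\gamma}$ and $|\nabla u|^p$). On the set $\{v_1 > v_2 + \varepsilon\}$ one has $u_1 > u_2$, so by monotonicity of $s \mapsto \frac{c}{s^\gamma}$ the right-hand contributions satisfy $g(v_1) - g(v_2) \le 0$ (after checking signs of the monotone prefactor), while the left-hand side is bounded below by a positive multiple of $\int |\nabla(v_1 - v_2 - \varepsilon)^+|^p$ via the strict monotonicity of the $p$-Laplacian operator $\xi \mapsto |\xi|^{p-2}\xi$. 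This yields $\nabla(v_1 - v_2 - \varepsilon)^+ = 0$ a.e., hence $(v_1 - v_2 - \varepsilon)^+ \equiv 0$ in $\Omega$ by the Poincaré inequality; letting $\varepsilon \to 0$ gives $v_1 \le v_2$, and since $\phi$ is strictly increasing, $u_1 \le u_2$ in $\Omega$.

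The main obstacle I anticipate is twofold. First, verifying that the change of variables genuinely removes the $|\nabla u|^p$ term for the full $p$-Laplacian (not just $p=2$): the chain rule for $\Delta_p(\phi(u))$ produces a term proportional to $\phi''(u)|\nabla u|^p / \phi'(u)$ multiplying $|\phi'(u)|^{p-2}$, and one must choose $\phi$ so that this exactly cancels the $\vartheta|\nabla u|^p$ term after multiplying through — this forces $\phi$ to solve a specific ODE, and one must check the resulting $\phi$ is well-defined, strictly increasing, and that the transformed source term has a sign amenable to comparison. Second, the rigor of testing with $(v_1 - v_2 - \varepsilon)^+$: one must confirm this function lies in $W^{1,p}_0(\Omega)$ and is an admissible test function in the weak formulations even though $v_i \notin W^{1,p}(\Omega)$ globally — this is handled by the $\varepsilon$-cutoff, which confines the support to a region where $d_\Omega \ge c(\varepsilon) > 0$, together with the local $C^{1,\alpha}$ (hence local $W^{1,p}$) regularity of the $v_i$. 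Everything else is a routine application of the strict monotonicity of the $p$-Laplacian.
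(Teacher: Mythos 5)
Your proposal is correct and takes essentially the same route as the paper: the paper uses the change of variable $v_i = e^{-\vartheta u_i/(p-1)}$, which solves the same ODE $\phi''/\phi' = -\vartheta/(p-1)$ you identify, verifies that the transformed nonlinearity $f(t)$ is increasing precisely on $\bigl[e^{-\frac{\vartheta}{p-1}(c/\vartheta)^{1/\gamma}},1\bigr)$ so that the extra hypothesis $u_1 \le (c/\vartheta)^{1/\gamma}$ places $v_1,v_2$ in the monotone range on the set $\{w>0\}$, and then tests with $w=(v_2-v_1-\varepsilon)^+$ together with the elementary inequality for $|\xi|^{p-2}\xi$ to conclude. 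The only cosmetic difference is your choice of an increasing $\phi$ (the paper's is decreasing), which simply flips the comparison direction and the cutoff test function.
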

	
	\begin{proof}
		Since the case $\vartheta=0$ was studied in Lemma \ref{lem:wcp}, we will assume $\vartheta>0$ in what follows.
		The main obstacle to this principle is the gradient terms, which may be unbounded near the boundary. We eliminate these terms by employing the following change of variable
		\[
		v_i=e^{-\frac{\vartheta u_i}{p-1}}, \quad i=1,2.
		\]
		Then $v_1,v_2\in C^{1,\alpha}_{\rm loc}(\Omega)$ and
		\[
		e^{-\frac{\vartheta}{p-1} \left( \frac{c}{\vartheta} \right)^{1/\gamma}} \le v_1<1,\quad
		e^{-\frac{\vartheta \|u_2\|_{L^\infty(\Omega)}}{p-1}} \le v_2 < 1 \quad\text{ in } \Omega
		\]
		in the weak sense. Also in the weak sense, we have
		\begin{align*}
			\Delta_p v_1 &= \left(\frac{\vartheta v_1}{p-1}\right)^{p-1} (-\Delta_p u_1 + \vartheta |\nabla u_1|^p)\\
			&\le \left(\frac{\vartheta v_1}{p-1}\right)^{p-1} \left(\frac{c}{u_1^\gamma} - \vartheta\right)\\
			&= \left(\frac{\vartheta v_1}{p-1}\right)^{p-1} \left(\frac{c}{(-\frac{p-1}{\vartheta}\ln v_1)^\gamma} - \vartheta\right) \quad\text{ in } \Omega.
		\end{align*}
		Similarly,
		\[
		\Delta_p v_2 \ge \left(\frac{\vartheta v_2}{p-1}\right)^{p-1} \left(\frac{c}{(-\frac{p-1}{\vartheta}\ln v_2)^\gamma} - \vartheta\right) \quad\text{ in } \Omega.
		\]
		
		Therefore, $v_1,v_2$ solve
		\begin{equation}\label{v_sys}
			\begin{cases}
				\Delta_p v_1 \le f(v_1) &\text{ in } \Omega,\\
				\Delta_p v_2 \ge f(v_2) &\text{ in } \Omega,\\
				0<v_1,v_2<1 &\text{ in } \Omega,\\
				v_2 \le v_1 &\text{ on } \partial\Omega,
			\end{cases}
		\end{equation}
		where
		\[
		f(t) = \left(\frac{\vartheta t}{p-1}\right)^{p-1} \left(\frac{c}{(-\frac{p-1}{\vartheta}\ln t)^\gamma} - \vartheta\right), \quad t\in(0,1).
		\]
		
		Due to the boundary condition, we can use $w=(v_2-v_1-\varepsilon)^+$ as a test function in the first two inequalities of \eqref{v_sys}. Then subtracting, we derive
		\[
		\int_\Omega \langle |\nabla v_2|^{p-2}\nabla v_2 - |\nabla v_1|^{p-2}\nabla v_1, \nabla w \rangle \le \int_\Omega \left(f(v_1) - f(v_2)\right) w.
		\]
		
		We recall the following well-known elementary inequality: there exists a positive constant \( \Lambda \) depending only on \( N, p > 1 \) such that
		\begin{equation}\label{p_ineq}
			\langle |\xi|^{p-2} \xi - |\xi'|^{p-2} \xi', \, \xi - \xi' \rangle \geq \Lambda (|\xi| + |\xi'|)^{p-2} |\xi - \xi'|^2
		\end{equation}
		for all $\xi, \xi' \in \mathbb{R}^N$ with $|\xi| + |\xi'| > 0$.
		In view of \eqref{p_ineq}, we have
		\begin{equation}\label{wcp_key}
			\Lambda \int_\Omega (|\nabla v_1| + |\nabla v_2|)^{p-2} |\nabla w|^2 \le \int_\Omega \left(f(v_1) - f(v_2)\right) w.
		\end{equation}
		
		On the other hand,
		\[
		f'(t) = \frac{\vartheta^{p-1} t^{p-2}}{(p-1)^{p-2}} \left[ \left( \frac{c}{\left( -\frac{p-1}{\vartheta} \ln t \right)^\gamma} - \vartheta \right) + \frac{c \gamma \vartheta^{\gamma}}{\left( -\frac{p-1}{\vartheta} \ln t \right)^{\gamma + 1}} \right]
		\quad \text{for } t \in (0,1).
		\]
		Hence $f'(t) > 0$ whenever $f(t)\ge0$. Moreover,
		\[
		f(t) \ge 0 \iff e^{-\frac{\vartheta}{p-1} \left( \frac{c}{\vartheta} \right)^{1/\gamma}} \le t < 1.
		\]
		Thus, $f$ is increasing on $\left[e^{-\frac{\vartheta}{p-1} \left( \frac{c}{\vartheta} \right)^{1/\gamma}}, 1\right)$.		
		Notice now that in $\Omega\cap\{w>0\}$, we have
		\[
		e^{-\frac{\vartheta}{p-1} \left( \frac{c}{\vartheta} \right)^{1/\gamma}} \le v_1<v_1+\varepsilon < v_2 < 1.
		\]
		Hence
		\[
		f(v_1) - f(v_2) < 0.
		\]	
		Therefore, \eqref{wcp_key} yields
		\[
		\int_\Omega (|\nabla v_1| + |\nabla v_2|)^{p-2} |\nabla w|^2 = 0.
		\]
		Consequently, $v_2 \le v_1 + \varepsilon$ in $ \Omega$. Since $\varepsilon$ is arbitrary, we get $v_2 \le v_1$ in $ \Omega$. This means $u_1 \le u_2$ in $ \Omega$.
	\end{proof}
	
	We prepare the following lemma for later use in dealing with the case $0<\gamma<1$.
	\begin{lemma}\label{lem:U}
		Assume $0<\gamma<1$. Then for every $M>0$ and $r_2>r_1>0$, the problem
		\begin{equation}\label{U}
			\begin{cases}
				-\Delta_p u = \frac{M}{u^\gamma} &\text{ in } B_{r_2}\setminus\overline{B_{r_1}},\\
				u>0 &\text{ in } B_{r_2}\setminus\overline{B_{r_1}},\\
				u=0 &\text{ on } \partial(B_{r_2}\setminus\overline{B_{r_1}})
			\end{cases}
		\end{equation}
		admits a unique solution $U \in C^{1,\alpha}(B_{r_2}\setminus\overline{B_{r_1}}) \cap C(\overline{B_{r_2}\setminus{B_{r_1}}})$. Moreover, there exists $C>0$ such that
		\[
		c d_{B_{r_2}\setminus\overline{B_{r_1}}}(x) \le U(x) \le C d_{B_{r_2}\setminus\overline{B_{r_1}}}(x) \quad\text{ for all } x \in B_{r_2}\setminus\overline{B_{r_1}}.
		\]
	\end{lemma}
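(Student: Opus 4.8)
Write $A:=B_{r_2}\setminus\overline{B_{r_1}}$, a bounded smooth domain, and $d:=d_A$. The plan is to construct $U$ by approximation, establish the lower bound $c\,d\le U$ and the upper bound $U\le C\,d$ by comparison with explicit barriers, and deduce $U\in C(\overline A)$ from the resulting squeeze; throughout I would invoke the comparison principle of Lemma~\ref{lem:wcp}, whose proof is valid verbatim on $A$. Existence of a weak solution $U\in W^{1,p}_0(A)$ that is positive on every compact subset of $A$ is obtained in the usual way for singular problems (cf.\ \cite{MR2592976,MR3136107,MR3478284}): one solves the regularized non-singular problems $-\Delta_p U_n=M\big((U_n)^++\tfrac1n\big)^{-\gamma}$ in $A$, $U_n=0$ on $\partial A$ (uniquely solvable by monotone-operator theory, with $U_n\ge0$ nondecreasing in $n$), tests with $U_n$ and uses $\gamma<1$ to get the uniform bound $\|\nabla U_n\|_{L^p(A)}^{\,p-1+\gamma}\le C$, and passes to the limit; on compact sets $U\ge U_1>0$. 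Consequently $MU^{-\gamma}\in L^\infty_{\mathrm{loc}}(A)$, so $U\in C^{1,\alpha}_{\mathrm{loc}}(A)$ by standard interior regularity, and uniqueness follows at once from Lemma~\ref{lem:wcp}.

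For the lower bound, let $\tau\in C^{1,\alpha}(\overline A)$ solve the $p$-torsion problem $-\Delta_p\tau=1$ in $A$, $\tau=0$ on $\partial A$. Then $\tau>0$ in $A$, and the Hopf boundary lemma for the $p$-Laplacian \cite{MR768629} together with $\tau\in C^1(\overline A)$ yields $\tau\ge\kappa\,d$ in $A$ for some $\kappa>0$. Setting $\varepsilon_0:=\big(M\,\|\tau\|_{L^\infty(A)}^{-\gamma}\big)^{1/(p-1+\gamma)}$, one checks $-\Delta_p(\varepsilon_0\tau)=\varepsilon_0^{p-1}\le M(\varepsilon_0\tau)^{-\gamma}$ in $A$, so $\varepsilon_0\tau$ is a subsolution vanishing on $\partial A$, and Lemma~\ref{lem:wcp} gives $U\ge\varepsilon_0\tau\ge\varepsilon_0\kappa\,d$ in $A$.

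For the upper bound---the core of the argument---fix $\delta_0>0$ so that on the collar $\Sigma_\delta:=\{x\in A:\,d(x)<\delta\}$, $\delta\le\delta_0$, the distance $d$ is $C^2$ with $|\Delta d|\le K$. By the lower bound, $-\Delta_p U=MU^{-\gamma}\le C_1\,d^{-\gamma}$ weakly in $A$, where $C_1:=M(\varepsilon_0\kappa)^{-\gamma}$. I would use the radial barrier $\overline u:=\psi(d)$ with $\psi$ solving
\[
-(p-1)(\psi')^{p-2}\psi''-K(\psi')^{p-1}=C_1\,t^{-\gamma},\qquad \psi(0)=0,\ \ \psi'(0)=A_0,
\]
$A_0$ large. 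The substitution $h:=(\psi')^{p-1}$ turns this into the linear ODE $h'+Kh=-C_1t^{-\gamma}$, whose explicit solution shows---since $t^{-\gamma}$ is integrable at $0$ because $\gamma<1$---that for $\delta$ small relative to $A_0$ one has $\psi'\in[2^{-1/(p-1)}A_0,A_0]$ on $[0,\delta]$, so $\psi$ is positive, increasing, concave, and $2^{-1/(p-1)}A_0\,t\le\psi(t)\le A_0\,t$. A direct computation using $|\nabla d|=1$ and $|\Delta d|\le K$ gives in $\Sigma_\delta$
\[
-\Delta_p\overline u=-(p-1)(\psi')^{p-2}\psi''(d)-(\psi')^{p-1}(d)\,\Delta d\ \ge\ C_1\,d^{-\gamma}\ \ge\ MU^{-\gamma},
\]
where $\overline u\in W^{1,\infty}(\Sigma_\delta)$ and the identity holds in $L^1_{\mathrm{loc}}$, consistently with $\psi''\sim-t^{-\gamma}$. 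Choosing $A_0$ large and then $\delta:=2^{1/(p-1)}\|U\|_{L^\infty(A)}/A_0\le\delta_0$ makes $\overline u=\psi(\delta)\ge\|U\|_{L^\infty(A)}\ge U$ on $\{d=\delta\}$, while $\overline u=0=U$ on $\partial A$; since $U\in W^{1,p}_0(A)$, the ordering $U\le\overline u$ on $\partial\Sigma_\delta$ holds in the weak sense. The comparison principle for $-\Delta_p$ with the fixed datum $C_1d^{-\gamma}\in L^1(\Sigma_\delta)$ then gives $U\le\overline u\le A_0\,d$ in $\Sigma_\delta$, and since $U\le\|U\|_{L^\infty(A)}\le(\|U\|_{L^\infty(A)}/\delta)\,d$ on $A\setminus\Sigma_\delta$, we get $U\le C\,d$ in $A$. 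Finally, the two-sided bound $\varepsilon_0\kappa\,d\le U\le C\,d$ forces $U\to0$ at $\partial A$, which together with the interior continuity already established gives $U\in C(\overline A)$.

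The whole weight of the proof sits in the upper bound: one must exhibit a supersolution that is \emph{linear} in $d$ near $\partial A$ although the forcing term blows up like $d^{-\gamma}$. The way out is to permit the profile $\psi$ to have the integrable singular curvature $\psi''\sim-t^{-\gamma}$---so $\psi\in C^1$ but not $C^2$ up to $t=0$, which is precisely what is compatible with $\overline u\in W^{1,p}$---and to take the initial slope $A_0$ large enough that this single collar barrier also dominates $U$ on the inner face $\{d=\delta\}$. The only remaining technical point, namely making the boundary ordering on $\partial\Sigma_\delta$ rigorous via $U\in W^{1,p}_0(A)$, is routine within the weak framework already set up in the paper.
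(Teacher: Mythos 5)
Your proof is correct, but for the decisive upper bound $U\le C\,d$ it takes a genuinely different route from the paper. The paper's argument uses that, by uniqueness and the rotational invariance of the annulus, $U$ is \emph{radial}, so the problem reduces to the one-dimensional ODE $-(r^{N-1}|U'|^{p-2}U')'=Mr^{N-1}U^{-\gamma}$; inserting the lower bound $U\ge c(r-r_1)$ makes the right-hand side dominated by $(r-r_1)^{-\gamma}$, which is integrable at $r_1$ precisely because $\gamma<1$, and a single integration from $r_1$ to $(r_1+r_2)/2$ shows $|U'(r_1)|<\infty$, hence $U(r)\le C(r-r_1)$ near $r_1$, with the symmetric argument near $r_2$. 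Your construction replaces this radial shortcut by a collar barrier $\overline u=\psi(d)$ with $\psi$ solving a profile ODE tuned to absorb both the curvature contribution $K(\psi')^{p-1}$ and the integrable singular forcing $C_1t^{-\gamma}$; the net effect is the same (a supersolution linear in $d$ near $\partial A$ despite the forcing blowing up), but your argument never invokes radiality and would apply verbatim to any $C^2$ domain. The price is a more delicate $L^1$-data comparison on the collar, attention to the fact that $\overline u$ is $W^{1,\infty}$ but not $C^2$ up to $\partial A$, and a self-consistent choice of $A_0$ and $\delta$ --- all of which the paper's ODE reduction sidesteps. For the lower bound you use $\varepsilon_0\tau$ with $\tau$ the $p$-torsion function, whereas the paper uses the first $p$-eigenfunction $s\phi_1$; these are interchangeable variants, both reducing to Lemma~\ref{lem:wcp} plus the Hopf lemma for a smooth auxiliary function. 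The existence and uniqueness part is handled the same way in both (by citation to \cite{MR3136107,MR3478284} or the standard regularization you describe).
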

	\begin{proof}
		The existence of a unique solution $U$ to \eqref{U} is proved in \cite{MR3136107,MR3478284}. Hence $U$ is radial. By abuse of notation, we also write $U(x)=U(r)$, where $r=|x|$. We claim that, for some $c>0$,
		\begin{equation}\label{U_lower_bound}
			U(r) \ge c \min\{r-r_1, r_2-r\} \quad\text{ for all } r \in (r_1,r_2).
		\end{equation}
		
		To prove this, let $\lambda_1>0$ be the first eigenvalue and $\phi_1 \in C^1(\overline{B_{r_2}}\setminus{B_{r_1}})$ a corresponding positive eigenfunction of the $p$-Laplacian in $B_{r_2}\setminus\overline{B_{r_1}}$ with the Dirichlet boundary condition. Then we set
		\[
		V_s = s \phi_1,
		\]
		where $s>0$. We have
		\[
		-\Delta_p V_s = \frac{h_s(x)}{V_s^\gamma},
		\]
		where
		\[
		h_s(x) = \lambda_1 s^{\gamma+p-1} \phi_1^{\gamma+p-1}.
		\]
		
		Let $\tilde s>0$ be small such that
		\[
		V_{\tilde s}(x) \le M \quad\text{ for } x\in B_{r_2}\setminus\overline{B_{r_1}}.
		\]
		
		We have now
		\[
		\begin{cases}
			-\Delta_p U = \frac{M}{U^\gamma} &\text{ in } B_{r_2}\setminus\overline{B_{r_1}},\\
			- \Delta_p V_{\tilde s} \le \frac{M}{V_{\tilde s}^\gamma} &\text{ in } B_{r_2}\setminus\overline{B_{r_1}},\\
			U,V_{\tilde s}>0 &\text{ in } B_{r_2}\setminus\overline{B_{r_1}},\\
			U = V_{\tilde s} = 0 &\text{ on } \partial (B_{r_2}\setminus\overline{B_{r_1}}).
		\end{cases}
		\]
		Hence Lemma \ref{lem:wcp} gives
		\[
		V_{\tilde s}\le U \quad \text{ in } B_{r_2}\setminus\overline{B_{r_1}}.
		\]
		This implies
		\[
		U(x) \ge c d_{B_{r_2}\setminus\overline{B_{r_1}}}(x) \quad \text{ in } B_{r_2}\setminus\overline{B_{r_1}}
		\]
		for some $c>0$. Hence \eqref{U_lower_bound} is verified.
		
		Since $U$ is radial, exploiting \eqref{U} and \eqref{U_lower_bound}, we get
		\[
		-\frac{1}{r^{N-1}} (r^{N-1} |U'(r)|^{p-2} U'(r))' = \frac{M}{U(r)^\gamma} \le \frac{M}{c^\gamma(r-r_1)^\gamma} \quad\text{ for } r_1 < r \le \frac{r_1+r_2}{2}.
		\]		
		Integrating this, we deduce
		\begin{align*}
			0 &\le r_1^{N-1} |U'(r_1)|^{p-2} U'(r_1)\\
			&\le \int_{r_1}^{\frac{r_1+r_2}{2}} \frac{M r^{N-1}}{c^\gamma(r-r_1)^\gamma} dr + \left(\frac{r_1+r_2}{2}\right)^{N-1} \left|U'\left(\frac{r_1+r_2}{2}\right)\right|^{p-2} U'\left(\frac{r_1+r_2}{2}\right)\\
			& < +\infty,
		\end{align*}
		thanks to the fact $\gamma<1$. Thus there exists $C>0$ such that
		\[
		U(r) \le C (r-r_1) \quad\text{ for all } r \in (r_1,r_2).
		\]
		
		Similarly, by integrating on $[r,r_2] \subset \left[\frac{r_1+r_2}{2}, r_2\right]$, we have 
		for some $C'>0$,
		\[
		U(r) \le C' (r_2-r) \quad\text{ for all } r \in (r_1,r_2).
		\]
		
		The lemma is proved.
	\end{proof}
	
	We are ready to provide a proof of Theorem \ref{th:asymptotic}.
	
	\begin{proof}[Proof of Theorem \ref{th:asymptotic}]
		Since $\frac{1}{u^\gamma} + f(u)$ is bounded in each compact set $K \subset\subset\Omega$, we have that $u\in C^{1,\alpha}_{\rm loc}(\Omega)$ by standard elliptic regularity.
		
		Let $\lambda_1>0$ and $\phi_1 \in C^1(\overline{\Omega})$ be the first eigenvalue and a corresponding positive eigenfunction of the $p$-Laplacian in $\Omega$ with the Dirichlet boundary condition, namely,
		\[
		\begin{cases}
			-\Delta_p \phi_1 = \lambda_1 \phi_1^{p-1} & \text{ in } \Omega,\\
			\phi_1 > 0 & \text{ in } \Omega,\\
			\phi_1 = 0 & \text{ on } \partial \Omega.
		\end{cases}
		\]	
		We consider three cases.
		
		\textit{Case 1: $\gamma>1$.}		
		Setting
		\[
		w_s = s \phi_1^\frac{p}{\gamma+p-1},
		\]
		where $s>0$. A direct calculation yields
		\[
		-\Delta_p w_s = \frac{a_s(x)}{w_s^\gamma},
		\]
		where
		\[
		a_s(x) = s^{\gamma+p-1} \left(\frac{p}{\gamma+p-1}\right)^{p-1} \left[\lambda_1\phi_1(x)^p + \frac{(\gamma-1)(p-1)}{\gamma+p-1}|\nabla\phi_1(x)|^p\right].
		\]
		
		Since $u \in L^\infty(\Omega)$, we can find $M>0$ sufficiently large such that
		\begin{equation}\label{M}
			\frac{1}{u^\gamma} + f(u) < \frac{M}{u^\gamma} \quad\text{ for } x\in \Omega.
		\end{equation}
		On the other hand, by the positivity of $\phi_1$ and the Hopf boundary lemma for $\phi_1$, we know that
		\[
		\inf_{x\in \Omega} \left[\lambda_1\phi_1(x)^p + \frac{(\gamma-1)(p-1)}{\gamma+p-1}|\nabla\phi_1(x)|^p\right] > 0.
		\]
		Hence, we can choose some large $s_1>0$ such that
		\[
		a_{s_1}(x) > M \quad\text{ for } x\in \Omega.
		\]
		
		We have now
		\[
		\begin{cases}
			-\Delta_p u \le \frac{M}{u^\gamma} &\text{ in } \Omega,\\
			-\Delta_p w_{s_1} \ge \frac{M}{w_{s_1}^\gamma} &\text{ in } \Omega,\\
			w_{s_1}, u > 0 &\text{ in } \Omega,\\
			w_{s_1} = u = 0 &\text{ on } \partial \Omega.
		\end{cases}
		\]
		
		Lemma \ref{lem:wcp} can be applied to yield
		\[
		u \le w_{s_1} \quad \text{ in } \Omega.
		\]
		This implies $u \in C(\overline{\Omega})$ and
		\[
		u \le C d_\Omega(x)^\frac{p}{\gamma+p-1} \quad \text{ in } \Omega \quad\text{ for some } C>0.
		\]
		
		Next, we prove the lower bound. To this end, we write
		\[
		-\Delta_p w_s + \vartheta |\nabla w_s|^p + \vartheta = \frac{b_s(x)}{w_s^\gamma},
		\]
		where
		\begin{align*}
			b_s(x) 
			&= a_s(x) + \vartheta (|\nabla w_s|^p + 1)w_s\\
			&= s^{\gamma + p - 1} \left( \frac{p}{\gamma + p - 1} \right)^{p - 1} \left[ \lambda_1 \phi_1^p + \left( \frac{\vartheta sp}{\gamma + p - 1} \phi_1^p + \frac{(\gamma - 1)(p - 1)}{\gamma + p - 1} \right) |\nabla \phi_1|^p \right]\\
			&\quad + \vartheta s^\gamma \phi_1^\frac{\gamma p}{\gamma+p-1}.
		\end{align*}
		
		Exploiting $u \in C(\overline{\Omega})$ proved above and the Dirichlet boundary condition, we can find $\delta>0$ sufficiently small such that
		\[
		\frac{1}{u^\gamma} + f(u) > \frac{1}{2u^\gamma} \quad\text{ for } x\in I_\delta(\Omega).
		\]
		We also choose some small $s_2>0$ such that
		\[
		w_{s_2}(x) \le \left( \frac{1}{2\vartheta} \right)^{1/\gamma},\quad b_{s_2}(x) < \frac{1}{2} \quad\text{ for } x\in I_\delta(\Omega)
		\]
		and
		\[
		w_{s_2}(x) \le u(x) \quad\text{ for } x\in \partial I_\delta(\Omega) \setminus \partial\Omega.
		\]
		
		Since $0<q\le p$, it follows $|\nabla u|^q < |\nabla u|^p + 1$. Therefore, we have
		\[
		\begin{cases}
			-\Delta_p w_{s_2} + \vartheta |\nabla w_{s_2}|^p + \vartheta \le \frac{1}{2 w_{s_2}^\gamma} &\text{ in } I_\delta(\Omega),\\
			-\Delta_p u + \vartheta |\nabla u|^p + \vartheta \ge \frac{1}{2 u^\gamma} &\text{ in } I_\delta(\Omega),\\
			w_{s_2}, u > 0 &\text{ in } I_\delta(\Omega),\\
			w_{s_2} \le u &\text{ on } \partial I_\delta(\Omega),
		\end{cases}
		\]
		
		Lemma \ref{lem:wcp_singular} can be applied now to yield
		\[
		w_{s_2}\le u \quad \text{ in } I_\delta(\Omega).
		\]
		By compactness arguments, this implies
		\[
		u \ge c d_\Omega(x)^\frac{p}{\gamma+p-1} \quad \text{ in } \Omega \quad\text{ for some } c>0.
		\]	
		
		\textit{Case 2: $0<\gamma<1$.} This case is also treated via comparison principles, but with a different choice of barrier functions. The upper barrier based on the power of the first eigenfunction simply do not work. Instead, we use the one from Lemma \ref{lem:U}.
		
		Let $M>0$ be as in \eqref{M}. Now we fix $r_1>0$ small such that for every $x_0 \in \partial \Omega$, there is a ball $B_{r_1}(y_0) \subset\mathbb{R}^N\setminus\overline\Omega$ that touch $\Omega$ at $x_0$. We also fix $r_2=\text{diam}(\Omega)+r_1$, where
		\[
		\text{diam}(\Omega) = \sup_{x_1,x_2\in\Omega} |x_1-x_2|
		\]
		is the diameter of $\Omega$. Let $U$ be defined as in Lemma \ref{lem:U}.
		Consider any $x \in \Omega$ and let $x_0 \in \partial \Omega$ be such that $d_\Omega(x) = |x-x_0|$. Let $B_{r_1}(y_0) \subset\mathbb{R}^N\setminus\overline\Omega$ be the ball of radius $r_1$ that touch $\Omega$ at $x_0$, then
		\[
		\Omega \subset B_{r_2}(y_0)\setminus\overline{B_{r_1}(y_0)}.
		\]
		Moreover,		
		\[
		\begin{cases}
			-\Delta_p u \le \frac{M}{u^\gamma} &\text{ in } \Omega,\\
			- \Delta_p U_{y_0} = \frac{M}{U_{y_0}^\gamma} &\text{ in } \Omega,\\
			u,U_{y_0}>0 &\text{ in } \Omega,\\
			0 = u \le U_{y_0} &\text{ on } \partial \Omega,
		\end{cases}
		\]
		where $U_{y_0} (x) := U(x - y_0)$.		
		Hence by Lemma \ref{lem:wcp}, we deduce
		\[
		u \le U_{y_0} \quad \text{ in } \Omega.
		\]
		Now Lemma \ref{lem:U} gives
		\[
		u(x) \le C d_{B_{r_1}(y_0)}(x) = C d_\Omega(x) \quad \text{ in } \Omega.
		\]
		In particular, this implies that $u \in C(\overline{\Omega})$.
		
		To derive the lower bound, we set
		\[
		v_s = s \phi_1,
		\]
		where $s>0$ and we recall that $\phi_1$ is a first positive eigenfunction of the $p$-Laplacian in $\Omega$. We have
		\[
		-\Delta_p v_s + \vartheta |\nabla v_s|^p + \vartheta = \frac{d_s(x)}{v_s^\gamma},
		\]
		where
		\[
		d_s(x) = s^{p + \gamma - 1} \lambda_1 \phi_1^{p + \gamma - 1}
		+ \vartheta s^{p + \gamma} \phi_1^\gamma |\nabla \phi_1|^p
		+ \vartheta s^\gamma \phi_1^\gamma.
		\]
		
		Since $u \in C(\overline{\Omega})$ and $u=0$ on $\partial\Omega$, we can find $\delta>0$ sufficiently small such that
		\[
		\frac{1}{u^\gamma} + f(u) > \frac{1}{2u^\gamma} \quad\text{ for } x\in I_\delta(\Omega).
		\]		
		We also choose some small $s_0>0$ such that
		\[
		v_{s_0}(x) \le \left( \frac{1}{2\vartheta} \right)^{1/\gamma},\quad d_{s_0}(x) < \frac{1}{2} \quad\text{ for } x\in I_\delta(\Omega)
		\]
		and
		\[
		v_{s_0}(x) \le u(x) \quad\text{ for } x\in \partial I_\delta(\Omega) \setminus \partial\Omega.
		\]
		
		We have now
		\[
		\begin{cases}
			-\Delta_p u + \vartheta |\nabla u|^p + \vartheta \ge \frac{1}{2u^\gamma} &\text{ in } I_\delta(\Omega),\\
			- \Delta_p v_{s_0} + \vartheta |\nabla v_{s_0}|^p + \vartheta \le \frac{1}{2v_{s_0}^\gamma} &\text{ in } I_\delta(\Omega),\\
			u,v_{s_0}>0 &\text{ in } I_\delta(\Omega),\\
			v_{s_0} \le u &\text{ on } \partial I_\delta(\Omega).
		\end{cases}
		\]
		Hence Lemma \ref{lem:wcp_singular} gives
		\[
		v_{s_0}\le u \quad \text{ in } I_\delta(\Omega).
		\]
		
		From this and the compactness argument, we obtain
		\[
		u (x)\ge c d_\Omega(x) \quad \text{ in } \Omega
		\]
		for some $c>0$.
		
		\textit{Case 3: $\gamma=1$.}
		Setting
		\[
		w_s = s \phi_1 (1- \ln \phi_1)^\frac{1}{p},
		\]
		where $s>0$ and $\phi_1$ is the first positive eigenfunction of the $p$-Laplacian in $\Omega$ such that $\|\phi_1\|_{L^\infty(\Omega)}=1$. After tedious computations, we obtain
		\[
		-\Delta_p w_s = \frac{a_s(x)}{w_s},
		\]
		where
		\begin{align*}
			&a_s(x)\\
			&= s^p \left[z\left(z - \frac{1}{p} z^{1-p}\right)^{p-1} \lambda_1 \phi_1^p + \frac{p-1}{p} \left(1 - \frac{1}{p} z^{-p}\right)^{p-2} \left(1 + \frac{p-1}{p} z^{-p}\right) |\nabla\phi_1|^p\right]
		\end{align*}
		with $z:=(1- \ln \phi_1)^\frac{1}{p}$. The range of $z$ is $[1,+\infty)$. Moreover, by L'Hôpital's rule, we have $\phi_1 z^k \to 0$ as $d_\Omega(x)\to0$ for any $k>0$. Hence, both terms in the square bracket are bounded above in $\Omega$. Their sum is also bounded below by a positive constant.
		
		%	Check:
		%\[
		%w_s = s f(\phi_1),
		%\]
		%where $f(t) = t (1- \ln t)^\frac{1}{p}$.
		%\[
		%f(t) = t z
		%\]
		%\[
		%f'(t) = (1- \ln t)^\frac{1}{p} - \frac{1}{p} (1- \ln t)^{\frac{1}{p}-1} = z - \frac{1}{p} z^{1-p}
		%\]
		%\[
		%f''(t) = -\frac{1}{pt} (1- \ln t)^{\frac{1}{p}-1} + \frac{1}{pt} ({\frac{1}{p}-1}) (1- \ln t)^{\frac{1}{p}-2} = -\frac{1}{p\phi_1} (z^{1-p} + \frac{p-1}{p} z^{1-2p}).
		%\]
		%We have
		%\[
		%\nabla w_s = s f'(\phi_1) \nabla \phi_1.
		%\]	
		%\begin{align*}
		%	\Delta_p w_s &= {\rm div} \left(s^{p-1} (f'(\phi_1))^{p-1} |\nabla \phi_1|^{p-2} \nabla \phi_1\right)\\
		%	&= s^{p-1} (f'(\phi_1))^{p-1} \Delta_p \phi_1 + s^{p-1} (p-1)(f'(\phi_1))^{p-2}f''(\phi_1) |\nabla\phi_1|^p\\
		%	&= s^{p-1} \left[(f'(\phi_1))^{p-1} \lambda_1 \phi_1^{p-1} + (p-1)(f'(\phi_1))^{p-2}f''(\phi_1) |\nabla\phi_1|^p\right]\\
		%	&= s^{p-1} \left[-\left(z - \frac{1}{p} z^{1-p}\right)^{p-1} \lambda_1 \phi_1^{p-1} - \frac{p-1}{p\phi_1} \left(z - \frac{1}{p} z^{1-p}\right)^{p-2} \left(z^{1-p} + \frac{p-1}{p} z^{1-2p}\right) |\nabla\phi_1|^p\right]
		%\end{align*}
		%Hence
		%\[
		%a_s(x) = s^p \left[z\left(z - \frac{1}{p} z^{1-p}\right)^{p-1} \lambda_1 \phi_1^p + \frac{p-1}{p} \left(1 - \frac{1}{p} z^{-p}\right)^{p-2} \left(1 + \frac{p-1}{p} z^{-p}\right) |\nabla\phi_1|^p\right]
		%\]
		
		Let $M>0$ as in \eqref{M} and choose some large $s_1>0$ such that
		\[
		a_{s_1}(x) > M \quad\text{ for } x\in \Omega.
		\]
		
		We have now
		\[
		\begin{cases}
			-\Delta_p u \le \frac{M}{u} &\text{ in } \Omega,\\
			-\Delta_p w_{s_1} \ge \frac{M}{w_{s_1}} &\text{ in } \Omega,\\
			w_{s_1}, u > 0 &\text{ in } \Omega,\\
			w_{s_1} = u = 0 &\text{ on } \partial \Omega.
		\end{cases}
		\]
		
		Lemma \ref{lem:wcp} can be applied to yield
		\[
		u \le w_{s_1} \quad \text{ in } \Omega.
		\]
		This implies $u \in C(\overline{\Omega})$ and
		\[
		u \le C d_\Omega(x) \left(L-\ln d_\Omega(x)\right)^\frac{1}{p} \quad \text{ in } \Omega \quad\text{ for some } C, L>0.
		\]
		
		Next, we prove the lower bound. To this end, we write
		\[
		-\Delta_p w_s + \vartheta |\nabla w_s|^p + \vartheta = \frac{b_s(x)}{w_s},
		\]
		where
		\begin{align*}
			&b_s(x)\\
			&= a_s(x) + \vartheta (|\nabla w_s|^p + 1)w_s\\
			&= s^p \left[z\left(z - \frac{1}{p} z^{1-p}\right)^{p-1} \lambda_1 \phi_1^p + \frac{p-1}{p} \left(1 - \frac{1}{p} z^{-p}\right)^{p-2} \left(1 + \frac{p-1}{p} z^{-p}\right) |\nabla\phi_1|^p\right]\\
			&\quad+ \vartheta s^{q+1} \phi_1 z \left(z - \frac{1}{p} z^{1-p}\right)^q |\nabla \phi_1|^q.
		\end{align*}
		
		Exploiting $u \in C(\overline{\Omega})$ and the Dirichlet boundary condition, we find $\delta>0$ sufficiently small such that
		\[
		\frac{1}{u} + f(u) > \frac{1}{2u} \quad\text{ for } x\in I_\delta(\Omega).
		\]
		We also choose some small $s_2>0$ such that
		\[
		w_{s_2}(x) \le \frac{1}{2\vartheta},\quad b_{s_2}(x) < \frac{1}{2} \quad\text{ for } x\in I_\delta(\Omega)
		\]
		and
		\[
		w_{s_2}(x) \le u(x) \quad\text{ for } x\in \partial I_\delta(\Omega) \setminus \partial\Omega.
		\]
		
		We have now
		\[
		\begin{cases}
			-\Delta_p w_{s_2} + \vartheta |\nabla w_{s_2}|^p + \vartheta \le \frac{1}{2 w_{s_2}} &\text{ in } I_\delta(\Omega),\\
			-\Delta_p u + \vartheta |\nabla u|^p + \vartheta \ge \frac{1}{2 u} &\text{ in } I_\delta(\Omega),\\
			w_{s_2}, u > 0 &\text{ in } I_\delta(\Omega),\\
			w_{s_2} \le u &\text{ on } \partial I_\delta(\Omega),
		\end{cases}
		\]
		
		Lemma \ref{lem:wcp_singular} can be applied now to yield
		\[
		w_{s_2}\le u \quad \text{ in } I_\delta(\Omega).
		\]
		By compactness arguments, this implies
		\[
		u \ge c d_\Omega(x) \left(L-\ln d_\Omega(x)\right)^\frac{1}{p} \quad \text{ in } \Omega \quad\text{ for some } c>0.
		\]
		
		This concludes the proof of Theorem \ref{th:asymptotic}.
	\end{proof}
	
	\section{Directional derivative estimates}\label{sect3}
	
	In this section, we prove the main results of the paper, namely, Theorems \ref{th:gamma>1}, \ref{th:gamma=1} and \ref{th:gamma<1}.
	
	\begin{proof}[Proof of Theorem \ref{th:gamma>1}]
		By Theorem \ref{th:asymptotic}, $u\in C^{1,\alpha}_{\rm loc}(\Omega)\cap C(\overline{\Omega})$ for some $0<\alpha<1$.
		
		(i) First, we prove \eqref{limit>1}. Suppose by contradiction that \eqref{limit>1} does not hold, then there exist $\varepsilon>0$ and subsequences, still denoted by $(x_n)$ and $(\nu_n)$, such that
		\begin{equation}\label{contra_assump>1}
			\left|\delta_n^\frac{\gamma-1}{\gamma+p-1} \frac{\partial u(x_n)}{\partial \nu_n} - \left(\frac{(\gamma + p - 1)^p}{p^{p-1}(p-1)(\gamma - 1)}\right)^{\frac{1}{\gamma + p - 1}} \frac{p\beta}{\gamma + p - 1}\right| \ge \varepsilon,
		\end{equation}
		where we denote
		\[
		\delta_n=d_\Omega(x_n).
		\]
		
		We start setting up the problem in convenient coordinates as in \cite{MR4044739}.
		Given that the domain is of class \( C^2 \), we can and do restrict our attention to a neighborhood of the boundary where the unique nearest point property is valid. Hence for large $n$, there is a unique point $\hat x_n$ such that $|x_n-\hat x_n|=d_\Omega(x_n)$.
		For each $n$, we choose a new coordinate such that in this coordinate, $\hat x_n=0$ and \( \eta(x_n) = e_N \). More precisely, for each \( n \in \mathbb{N} \), we can define an isometry \( T_n: \mathbb{R}^N \to \mathbb{R}^N \) that possesses the aforementioned properties by simply combining a translation and a rotation of the coordinate axes. This process generates a new sequence of points \(y_n := T_n (x_n) = \delta_n e_N\). Defining \( u_n(y) := u(T_n^{-1}(y)) \), it follows that
		\[
		-\Delta_p u_n + \vartheta |\nabla u_n|^q = \frac{1}{u_n^\gamma} + f(u_n) \quad\text{ in } \Omega_n,
		\]
		where $\Omega_n=T_n(\Omega)$. By Theorem \ref{th:asymptotic}, there exist $c_1,c_2>0$ such that for any $n$,
		\begin{equation}\label{un_bounds>1}
			c_1 d_{\Omega_n}(x)^\frac{p}{\gamma+p-1} \le u_n(x) \le c_2 d_{\Omega_n}(x)^\frac{p}{\gamma+p-1} \quad\text{ in } \Omega_n.
		\end{equation}
		
		Now we set
		\[
		w_n(y) := \delta_n^{-\frac{p}{\gamma+p-1}} u_n(\delta_n y) \quad\text{ for } y \in \tilde{\Omega}_n,
		\]
		where
		\[
		\tilde{\Omega}_n := \Omega_n/\delta_n.
		\]
		Then $w_n$ weakly solves
		\begin{equation}\label{wn_eq>1}
			-\Delta_p w_n + \delta_n^{\frac{\gamma (p - q) + q}{\gamma + p - 1}} \vartheta |\nabla w_n|^q = \frac{1}{w_n^\gamma} + \delta_n^{\frac{p \gamma}{\gamma + p - 1}} f\left( \delta_n^{\frac{p}{\gamma + p - 1}} w_n \right) \quad \text{ in } \tilde{\Omega}_n.
		\end{equation}
		
		We claim that up to a subsequence
		\begin{equation}\label{holder_convergence>1}
			w_n \to w_\infty \text{ in } C^{1,\alpha'}_{\rm loc}(\mathbb{R}^N_+) \quad\text{ as } n\to\infty \text{ for some } 0<\alpha'<1.
		\end{equation}
		To prove this, let any compact set $K \subset \mathbb{R}^N_+$. For $n$ large we have $K\subset \tilde\Omega_n$, or equivalently, $\delta_n K\subset \Omega_n$. Hence for any $y\in K$, we can apply \eqref{un_bounds>1} to get
		\[
		c_1 \delta_n^{-\frac{p}{\gamma+p-1}} d_{\Omega_n}(\delta_n y)^\frac{p}{\gamma+p-1} \le w_n(y) \le c_2 \delta_n^{-\frac{p}{\gamma+p-1}} d_{\Omega_n}(\delta_n y)^\frac{p}{\gamma+p-1}.
		\]
		
		On the other hand, we have
		\[
		d_{\Omega_n}(\delta_n y) = \delta_n d_{\tilde\Omega_n}(y)
		\]
		and
		\[
		d_{\tilde\Omega_n}(y) \to d_{\mathbb{R}^N_+}(y) = y_N.
		\]
		Here we use the fact that $\tilde\Omega_n$ leads to the limiting domain $\mathbb{R}^N_+$ as $n\to\infty$. Consequently, for large $n$,
		\begin{equation}\label{wn_bounds>1}
			\frac{c_1}{2} y_N^\frac{p}{\gamma+p-1} \le w_n(y) \le 2c_2 y_N^\frac{p}{\gamma+p-1} \quad \text{ for all } y\in K.
		\end{equation}
		Hence for any any compact set $K \subset \mathbb{R}^N_+$, we have $C_1(K) \le w_n(y) \le C_2(K)$ for all $y\in K$, where $C_1(K),C_2(K)$ are two positive constants depending on $K$. Thus, the right-hand side of \eqref{wn_eq>1} is uniformly bounded in $K$ for large $n$. By standard regularity theory \cite{MR709038,MR727034}, it follows that $(w_n)$ is uniformly bounded in $C^{1,\alpha}(K)$. Exploiting Ascoli’s Theorem and a standard diagonal process, by passing to a subsequence we derive $w_n \to w_\infty \text{ in } C^{1,\alpha'}_{\rm loc}(\mathbb{R}^N_+)$ as $n\to\infty \text{ for } 0<\alpha'<\alpha$. Hence \eqref{holder_convergence>1} is proved.
		
		Letting $n\to\infty$ in \eqref{wn_bounds>1}, we deduce
		\[
		\frac{c_1}{2} y_N^\frac{p}{\gamma+p-1} \le w_\infty(y) \le 2c_2 y_N^\frac{p}{\gamma+p-1} \quad\text{ for } y\in\mathbb{R}^N_+.
		\]
		In particular, we can extend $w_\infty$ by zero on $\partial\mathbb{R}^N_+$, then $w_\infty \in C^{1,\alpha'}_{\rm loc}(\mathbb{R}^N_+) \cap C(\overline{\mathbb{R}^N_+})$. Letting $n\to\infty$ in \eqref{wn_eq>1}, we have that $w_\infty$ weakly solves
		\[
		\begin{cases}
			-\Delta_p w_\infty = \frac{1}{w_\infty^\gamma} &\text{ in } \mathbb{R}^N_+,\\
			w_\infty>0 &\text{ in } \mathbb{R}^N_+,\\
			w_\infty=0 &\text{ on } \partial\mathbb{R}^N_+.
		\end{cases}
		\]
		By \cite[Theorem 1.2]{MR4044739}, we have
		\[
		w_\infty(x) \equiv \left(\frac{(\gamma+p-1)^p}{p^{p-1}(p-1)(\gamma-1)}\right)^\frac{1}{\gamma+p-1}	x_N^\frac{p}{\gamma+p-1}.
		\]
		
		On the other hand, setting $\tilde\nu_n=T_n(\nu_n)\in \mathbb{S}^{N-1}$, we have
		\[
		\langle \tilde\nu_n, e_N \rangle = \langle \nu_n, \eta(x_n) \rangle \to \beta.
		\]
		Up to a subsequence, we may assume $\tilde\nu_n \to \tilde{\nu_0} \in \mathbb{S}^{N-1}$ with $\langle \tilde{\nu_0}, e_N \rangle = \beta$.
		Therefore,
		\begin{align*}
			\delta_n^\frac{\gamma-1}{\gamma+p-1} \frac{\partial u(x_n)}{\partial \nu_n} &= \delta_n^\frac{\gamma-1}{\gamma+p-1} \frac{\partial u_n(y_n)}{\partial \tilde\nu_n} =
			\frac{\partial w_n(e_N)}{\partial \tilde\nu_n}\\
			& \to \frac{\partial w_\infty(e_N)}{\partial \tilde{\nu_0}}=\left(\frac{(\gamma + p - 1)^p}{p^{p-1}(p-1)(\gamma - 1)}\right)^{\frac{1}{\gamma + p - 1}} \frac{p\beta}{\gamma + p - 1}.
		\end{align*}
		However, this contradicts \eqref{contra_assump>1}.
		Hence \eqref{limit>1} is proved.
		
		(ii) Now we prove \eqref{grad_bound>1}. Suppose by contradiction that there exists a sequence of points $x_n\in\Omega$ such that
		\begin{equation}\label{contra_assump2>1}
			\delta_n^\frac{\gamma-1}{\gamma+p-1} |\nabla u(x_n)| \to +\infty \quad\text{ as } n\to\infty,
		\end{equation}
		where $\delta_n:=d_\Omega(x_n)$. Notice that for any $\delta>0$ we have
		\[
		0 < \inf_{\Omega\setminus I_\delta(\Omega)} u \le u(x) \le \sup_\Omega u < +\infty \text{ for } x\in \Omega\setminus I_\delta(\Omega).
		\]
		Hence $\frac{1}{u^\gamma} + f(u)$ is bounded in $\Omega\setminus I_\delta(\Omega)$. By standard gradient estimates, we deduce that $|\nabla u|$ is bounded in $\Omega\setminus I_\delta(\Omega)$ for any $\delta>0$. Therefore, \eqref{contra_assump2>1} implies
		\begin{equation}\label{delta_n>1}
			\delta_n \to 0.
		\end{equation}
		
		With \eqref{delta_n>1} in force, we can repeat the above arguments to construct $(u_n)$, $(w_n)$ and $w_\infty$. Then
		\[
		|\nabla w_n(e_N)| = \delta_n^\frac{\gamma-1}{\gamma+p-1} |\nabla u_n(y_n)| = \delta_n^\frac{\gamma-1}{\gamma+p-1} |\nabla u(x_n)| \to +\infty \quad\text{ as } n \to \infty.
		\]
		However, this contradicts the fact
		\[
		|\nabla w_n(e_N)| \to |\nabla w_\infty(e_N)| < +\infty.
		\]
		Hence \eqref{grad_bound>1} is proved.
		
		Lastly, from \eqref{grad_bound>1} and Theorem \ref{th:asymptotic}, we deduce
		\[
		\left|\nabla\left(u^\frac{\gamma+p-1}{p}\right)\right| = \frac{\gamma+p-1}{p} u^\frac{\gamma-1}{p} |\nabla u| \le C \frac{\gamma+p-1}{p} c_2^\frac{\gamma-1}{p}.
		\]
		Hence $u^\frac{\gamma+p-1}{p}$ is Lipschitz continuous.
		This indicates $u \in C^{\frac{p}{\gamma+p-1}}(\overline{\Omega})$.
	\end{proof}
	
	\begin{proof}[Proof of Theorem \ref{th:gamma=1}]
		Suppose by contradiction that there exist $\beta>0$, a sequence of points $x_n\in\Omega$ and a sequence of normal vectors $\nu_n\in \mathbb{S}^{N-1}$ with $\langle\nu_n,\eta(x_n)\rangle \ge \beta$ such that
		\begin{equation}\label{contra_assump2=1}
			\left(1-\ln \delta_n\right)^{-\frac{1}{p}} \frac{\partial u(x_n)}{\partial \nu_n} \to 0 \text{ and } \delta_n \to 0 \quad\text{ as } n\to\infty,
		\end{equation}
		where $\delta_n:=d_\Omega(x_n)$.
		We set up the problem in convenient coordinates as in the proof of Theorem \ref{th:gamma>1}. We also reuse the relevant notions in that proof.
		By Theorem \ref{th:asymptotic}, $u\in C^{1,\alpha}_{\rm loc}(\Omega)\cap C(\overline{\Omega})$ for some $0<\alpha<1$ and there exist $L,c_1,c_2>0$ such that for any $n$,
		\begin{equation}\label{un_bounds=1}
			c_1 d_{\Omega_n}(x) \left(L-\ln d_{\Omega_n}(x)\right)^\frac{1}{p} \le u_n(x) \le c_2 d_{\Omega_n}(x) \left(L-\ln d_{\Omega_n}(x)\right)^\frac{1}{p} \quad\text{ in } \Omega_n.
		\end{equation}
		
		Now we set
		\[
		w_n(y) := \delta_n^{-1} \left(L-\ln \delta_n\right)^{-\frac{1}{p}} u_n(\delta_n y) \quad\text{ for } y \in \tilde{\Omega}_n := \Omega_n/\delta_n.
		\]
		Then
		\begin{equation}\label{wn_eq=1}
			\begin{aligned}
				&-\Delta_p w_n + \delta_n \left(L-\ln \delta_n\right)^{\frac{q-p+1}{p}} |\nabla w_n|^q\\
				&= \frac{\left(L-\ln \delta_n\right)^{-1}}{w_n} + \delta_n \left(L-\ln \delta_n\right)^{-\frac{p-1}{p}} f\left(\delta_n \left(L-\ln \delta_n\right)^{\frac{1}{p}} w_n\right) \quad \text{ in } \tilde{\Omega}_n.
			\end{aligned}
		\end{equation}
		
		%		Check:
		%		\[
		%		\nabla w_n(y) := \left(L-\ln \delta_n\right)^{-\frac{1}{p}} \nabla u_n(\delta_n y)
		%		\]
		%		\[
		%		|\nabla w_n(y)|^{p-2} \nabla w_n(y) := \left(L-\ln \delta_n\right)^{-\frac{p-1}{p}} |\nabla u_n(\delta_n y)|^{p-2} \nabla u_n(\delta_n y)
		%		\]
		%		\[
		%		-\Delta_p w_n := -\delta_n \left(L-\ln \delta_n\right)^{-\frac{p-1}{p}} \Delta_p u_n
		%		\]
		%		\[
		%		-\Delta_p w_n := \delta_n \left(L-\ln \delta_n\right)^{-\frac{p-1}{p}} \left(\frac{1}{u_n} + f(u_n)\right)
		%		\]
		%		\[
		%		-\Delta_p w_n := \frac{\left(L-\ln \delta_n\right)^{-1}}{w_n} + \delta_n \left(L-\ln \delta_n\right)^{-\frac{p-1}{p}} f\left(\delta_n \left(L-\ln \delta_n\right)^{\frac{1}{p}} w_n\right)
		%		\]
		%		and for 1 eq		
		%		\[
		%		-\Delta_p w_n := \delta_n \left(L-\ln \delta_n\right)^{-\frac{p-1}{p}} \left(\frac{1}{u_n} + f(u_n) - |\nabla u_n|^q\right)
		%		\]
		%		\[
		%		-\Delta_p w_n := \frac{\left(L-\ln \delta_n\right)^{-1}}{w_n} + \delta_n \left(L-\ln \delta_n\right)^{-\frac{p-1}{p}} f\left(\delta_n \left(L-\ln \delta_n\right)^{\frac{1}{p}} w_n\right) - \delta_n \left(L-\ln \delta_n\right)^{-\frac{p-1}{p}} |\nabla u_n|^q
		%		\]
		%		\[
		%		-\Delta_p w_n + \delta_n \left(L-\ln \delta_n\right)^{\frac{q-p+1}{p}} |\nabla w_n|^q = \frac{\left(L-\ln \delta_n\right)^{-1}}{w_n} + \delta_n \left(L-\ln \delta_n\right)^{-\frac{p-1}{p}} f\left(\delta_n \left(L-\ln \delta_n\right)^{\frac{1}{p}} w_n\right)
		%		\]
		%		since
		%		\[
		%		|\nabla w_n(y)|^q := \left(L-\ln \delta_n\right)^{-\frac{q}{p}} |\nabla u_n(\delta_n y)|^q
		%		\]
		
		We claim that up to a subsequence
		\begin{equation}\label{holder_convergence=1}
			w_n \to w_\infty \text{ in } C^{1,\alpha'}_{\rm loc}(\mathbb{R}^N_+) \quad\text{ as } n\to\infty \text{ for some } 0<\alpha'<1.
		\end{equation}
		To prove this, let any compact set $K \subset \mathbb{R}^N_+$. For $n$ large we have $K\subset \tilde\Omega_n$, or equivalently, $\delta_n K\subset \Omega_n$. Hence for any $y\in K$, we can apply \eqref{un_bounds=1} to get
		\[
		c_1 \frac{d_{\Omega_n}(\delta_n y)}{\delta_n} \left(\frac{L-\ln d_{\Omega_n}(\delta_n y)}{L-\ln \delta_n}\right)^\frac{1}{p} \le w_n(y) \le c_2 \frac{d_{\Omega_n}(\delta_n y)}{\delta_n} \left(\frac{L-\ln d_{\Omega_n}(\delta_n y)}{L-\ln \delta_n}\right)^\frac{1}{p}.
		\]
		Now exploiting
		\[
		d_{\Omega_n}(\delta_n y) = \delta_n d_{\tilde\Omega_n}(y)
		\]
		and
		\[
		d_{\tilde\Omega_n}(y) \to d_{\mathbb{R}^N_+}(y) = y_N, \quad \delta_n \to 0,
		\]
		we obtain for large $n$,
		\begin{equation}\label{wn_bounds=1}
			\frac{c_1}{2} y_N \le w_n(y) \le 2c_2 y_N \quad \text{ for } y\in K.
		\end{equation}
		Thus, the right-hand side of \eqref{wn_eq=1} is uniformly bounded in $K$ for large $n$. By standard regularity theory \cite{MR709038,MR727034}, it follows that $(w_n)$ is uniformly bounded in $C^{1,\alpha}(K)$. Exploiting Ascoli’s Theorem and a standard diagonal process, by passing to a subsequence we derive $w_n \to w_\infty \text{ in } C^{1,\alpha'}_{\rm loc}(\mathbb{R}^N_+)$ as $n\to\infty \text{ for } 0<\alpha'<\alpha$. Hence \eqref{holder_convergence=1} is proved.
		
		Letting $n\to\infty$ in \eqref{wn_bounds=1}, we deduce
		\[
		\frac{c_1}{2} y_N \le w_\infty(y) \le 2c_2 y_N \quad\text{ for } y\in\mathbb{R}^N_+.
		\]
		In particular, we can extend $w_\infty$ by zero on $\partial\mathbb{R}^N_+$, then $w_\infty \in C^{1,\alpha'}_{\rm loc}(\mathbb{R}^N_+) \cap C(\overline{\mathbb{R}^N_+})$. Letting $n\to\infty$ in \eqref{wn_eq=1}, we have that $w_\infty$ weakly solves
		\[
		\begin{cases}
			-\Delta_p w_\infty = 0 &\text{ in } \mathbb{R}^N_+,\\
			w_\infty>0 &\text{ in } \mathbb{R}^N_+,\\
			w_\infty=0 &\text{ on } \partial\mathbb{R}^N_+.
		\end{cases}
		\]
		By \cite[Theorem 3.1]{MR2337497}, we have
		\[
		w_\infty(x) \equiv a x_N,
		\]
		for some $a>0$.
		On the other hand, setting $\tilde\nu_n=T_n(\nu_n)\in \mathbb{S}^{N-1}$, we have
		\[
		\langle \tilde\nu_n, e_N \rangle = \langle \nu_n, \eta(x_n).
		\]
		Up to a subsequence, we may assume $\tilde\nu_n \to \tilde{\nu_0} \in \mathbb{S}^{N-1}$ with $\langle \tilde{\nu_0}, e_N \rangle \ge \beta$.
		Therefore,
		\[
		\left(L-\ln \delta_n\right)^{-\frac{1}{p}} \frac{\partial u(x_n)}{\partial \nu_n} = \left(L-\ln \delta_n\right)^{-\frac{1}{p}} \frac{\partial u_n(y_n)}{\partial \tilde\nu_n} =
		\frac{\partial w_n(e_N)}{\partial \tilde\nu_n} \to \frac{\partial w_\infty(e_N)}{\partial \tilde{\nu_0}}=a\beta.
		\]
		However, this contradicts \eqref{contra_assump2=1}.
		Hence, the lower bound in \eqref{bound=1} is proved.
		
		Now we prove \eqref{grad_bound=1}, which also implies the upper bound in \eqref{bound=1}. Suppose by contradiction that there exists a sequence of points $x_n\in\Omega$ such that
		\[
		\left(L-\ln \delta_n\right)^{-\frac{1}{p}} |\nabla u(x_n)| \to +\infty \quad\text{ as } n\to\infty,
		\]
		where $\delta_n:=d_\Omega(x_n)$. This implies $\delta_n \to 0$. We can repeat the above arguments to construct $(u_n)$, $(w_n)$ and $w_\infty$. Then
		\[
		|\nabla w_n(e_N)| = \left(L-\ln \delta_n\right)^{-\frac{1}{p}} |\nabla u_n(y_n)| = \left(L-\ln \delta_n\right)^{-\frac{1}{p}} |\nabla u(x_n)| \to +\infty.
		\]
		However, this contradicts the fact
		\[
		|\nabla w_n(e_N)| \to |\nabla w_\infty(e_N)| < +\infty.
		\]
		Hence \eqref{grad_bound=1} is proved.
		
		Lastly, from \eqref{grad_bound=1} and Theorem \ref{th:asymptotic}, we deduce for any $s\in(0,1)$,
		\[
		\left|\nabla\left(u^\frac{1}{s}\right)\right| = s u^{\frac{1}{s}-1} |\nabla u| \le C' d_\Omega(x)^{\frac{1}{s}-1} \left(L-\ln d_\Omega(x)\right)^\frac{1}{ps} < C''
		\]
		since $\lim_{t\to0^+} t^{\frac{1}{s}-1} \left(L-\ln t\right)^\frac{1}{ps} = 0$.
		Hence $u^s$ is Lipschitz continuous.
		This indicates $u \in C^s(\overline{\Omega})$.
	\end{proof}
	
	\begin{proof}[Proof of Theorem \ref{th:gamma<1}]		
		By Theorem \ref{th:asymptotic} (iii), we have
		\begin{equation}\label{u_bounds<1}
			c_1 d_\Omega(x) \le u(x) \le c_2 d_\Omega(x)
		\end{equation}
		for some $c_2>c_1>0$.
		Therefore,
		\[
		-\Delta_p u = \frac{1}{u^\gamma} + f(u) - \vartheta |\nabla u|^q \le \frac{M}{u^\gamma} \le \frac{M}{d_\Omega(x)^\gamma},
		\]
		for some large $M>0$. Hence \cite[Theorem B.1]{MR2341518} can be applied to yield $u\in C^{1,\alpha}(\overline{\Omega})$ for some $0<\alpha<1$.
		
		Now \eqref{hopf<1} follows from \eqref{u_bounds<1} and the definition of directional derivatives.
	\end{proof}
	
	\section{Symmetry of solutions}\label{sect4}
	In this section, we prove the symmetry of solutions to problem \eqref{main}. We provide the details needed for the application of the moving plane method developed in \cite{MR4022661} for quasilinear elliptic problems with a gradient term.
	
	We recall some basic notation that will be used in this section. For any real number \( \lambda \) we set
	\[
	\Omega_\lambda = \{ x \in \Omega \mid x_1 < \lambda \}.
	\]
	We denote
	\[
	x_\lambda = R_\lambda(x) = (2\lambda - x_1, x_2, \ldots, x_n),
	\]
	which is the reflection of $x = (x_1, x_2, \ldots, x_n)$ through the hyperplane
	\[
	T_\lambda := \{ x \in \mathbb{R}^n \mid x_1 = \lambda \}.
	\]
	We also let
	\[
	a = \inf_{x \in \Omega} x_1
	\]
	and set
	\[
	u_\lambda(x) = u(x_\lambda).
	\]	
	Finally, we define
	\[
	\Lambda_0 = \{ a < \lambda < 0 \mid u \leq u_t \text{ in } \Omega_t \text{ for all } t \in (a, \lambda) \}.
	\]
	
	In what follows, we focus on the set of critical points of \( u \), defined by
	\[
	Z_u := \{ \nabla u = 0 \}.
	\]
	This set plays a key role in our analysis. From Corollary \ref{co:gamma>1}, Theorems \ref{th:gamma=1} and \ref{th:gamma<1}, it follows that
	\[
	Z_u \subset \Omega.
	\]
	This inclusion is significant, as it enables the application of the results in \cite{MR4022661}. Indeed, the solution remains positive throughout the interior of the domain, where the nonlinearity no longer exhibits singular behavior. As a consequence, we deduce that
	\[
	|Z_u| = 0 \quad \text{and} \quad \Omega \setminus Z_u \text{ is connected}.
	\]
	
	\begin{proof}[Proof of Theorem \ref{th:symmetry}]
		The argument is based on the moving plane method. We begin by establishing that
		\[
		\Lambda_0 \neq \emptyset.
		\]
		
		To do so, let us take a value \( \lambda > a \) with \( \lambda - a \) sufficiently small. According to Corollary \ref{co:gamma>1}, Theorems \ref{th:gamma=1} and \ref{th:gamma<1}, we have
		\[
		\frac{\partial u}{\partial x_1} > 0 \quad \text{in } \Omega_\lambda \cup R_\lambda(\Omega_\lambda),
		\]
		which directly implies that \( u < u_\lambda \) in \( \Omega_\lambda \).		
		Next, we define
		\[
		\lambda_0 := \sup \Lambda_0.
		\]
		
		Our goal is to prove that \( u \leq u_\lambda \) in \( \Omega_\lambda \) for all \( \lambda \in (a, 0] \), that is,
		\[
		\lambda_0 = 0.
		\]
		
		To prove this, we proceed by contradiction. Suppose \( \lambda_0 < 0 \). We aim to show that this leads to a contradiction by proving
		\[
		u \leq u_{\lambda_0 + \tau} \quad \text{in } \Omega_{\lambda_0 + \tau}
		\]
		for every \( 0 < \tau < \bar{\tau} \), where \( \bar{\tau} > 0 \) is suitably small. By the continuity, we already know that \( u \leq u_{\lambda_0} \) in \( \Omega_{\lambda_0} \).
		Moreover, the strong comparison principle (cf.~\cite{MR2356201}) applies in \( \Omega_{\lambda_0} \setminus Z_u \), which yields
		\begin{equation}\label{s1}
			u < u_{\lambda_0} \quad \text{in } \Omega_{\lambda_0} \setminus Z_u.
		\end{equation}
		Indeed, for each connected component \( \mathcal{C} \subset \Omega_{\lambda_0} \setminus Z_u \), the strong comparison principle ensures that \( u < u_{\lambda_0} \) throughout \( \mathcal{C} \), unless \( u \equiv u_{\lambda_0} \) in \( \mathcal{C} \). However, the latter case can be ruled out as follows. If \( \partial \mathcal{C} \cap \partial \Omega \neq \emptyset \), then \( u \equiv u_{\lambda_0} \) is not possible due to the zero Dirichlet boundary condition and the positivity of \( u \) in the interior. On the other hand, if \( \partial \mathcal{C} \cap \partial \Omega = \emptyset \), then this would imply the existence of a local symmetry region, contradicting the earlier remark that \( \Omega \setminus Z_u \) is connected.
		
		As a consequence of \eqref{s1}, for any compact set \( K \subset \Omega_{\lambda_0} \setminus Z_u \), the uniform continuity of \( u \) indicates that
		\begin{equation}\label{s2}
			u < u_{\lambda_0 + \tau} \quad \text{in } K
		\end{equation}
		for all \( 0 < \tau < \bar{\tau} \), with \( \bar{\tau} > 0 \) sufficiently small. In addition, using Corollary \ref{co:gamma>1}, Theorems \ref{th:gamma=1} and \ref{th:gamma<1} and the fact that \( u = 0 \) on \( \partial \Omega \), it is straightforward to verify that there exists \( \delta > 0 \) such that
		\begin{equation} \label{s3}
			u < u_{\lambda_0 + \tau} \quad \text{in } I_\delta(\Omega) \cap \Omega_{\lambda_0 + \tau}
		\end{equation}
		for every \( 0 < \tau < \bar{\tau} \). This conclusion follows from standard arguments once Corollary \ref{co:gamma>1}, Theorems \ref{th:gamma=1} and \ref{th:gamma<1} are assumed.
		The more delicate part of the proof concerns the behavior near the region \( \partial \Omega \cap T_{\lambda_0 + \tau} \). In this case, we rely on the monotonicity properties of solutions established in Corollary \ref{co:gamma>1}, Theorems \ref{th:gamma=1} and \ref{th:gamma<1}. These properties become applicable once we observe that, due to the smoothness and strict convexity of the domain, the inward unit normal vector \( \eta(x) \) satisfies
		$\langle e_1, \eta(x) \rangle > 0$
		in a neighborhood of \( \partial \Omega \cap T_{\lambda_0 + \tau} \). Now we define
		\[
		w_{\lambda_0 + \tau} := (u - u_{\lambda_0 + \tau})^+
		\]
		for all \( 0 < \tau < \bar{\tau} \), where \( \bar{\tau} > 0 \) is chosen small. From \eqref{s3}, we already know that
		$\operatorname{supp}(w_{\lambda_0 + \tau}) \subset\subset \Omega_{\lambda_0 + \tau}$.
		Additionally, by \eqref{s2}, we have \( w_{\lambda_0 + \tau} = 0 \) on the compact set \( K \).
		
		Given any fixed \( \tau > 0 \), we can select \( \bar{\tau} \) small enough and choose \( K \) large enough so that
		\[
		|\Omega_{\lambda_0 + \tau} \setminus K| < \tau.
		\]
		This step also relies on the fact that the critical set \( Z_u \) has zero Lebesgue measure (cf.~\cite[Theorem 2.1]{MR4022661}).
		
		Next, we take \( \tau \) sufficiently small so that the weak comparison principle in small domains (see \cite[Proposition 4.1]{MR4022661}) applies. This yields
		\[
		w_{\lambda_0 + \tau} = 0 \quad \text{in } \Omega_{\lambda_0 + \tau}
		\]
		for all \( 0 < \tau < \bar{\tau} \), for some small \( \bar{\tau} > 0 \). This conclusion contradicts the definition of \( \lambda_0 \), and thus we must have
		\[
		\lambda_0 = 0.
		\]
		
		The symmetry and monotonicity result then follows by repeating the same argument, now applied in the opposite direction.
		
		This completes the proof of the theorem.
	\end{proof}
	
	\section*{Acknowledgments}
	This research was conducted during the author's visit to the Vietnam Institute for Advanced Study in Mathematics (VIASM) in 2025. The author gratefully acknowledges the institute's hospitality and generous support during this period.
	
	\bibliographystyle{abbrvurl}
	\bibliography{../../../references}
	
\end{document}